\def\d{\,\mathrm{d}}
\newcommand{\E}{\mathbb{E}}
\newcommand{\R}{\mathbb{R}}
\newcommand{\N}{\mathbb{N}}
\newcommand{\p}{\mathbb{P}}
\newcommand{\U}{{\rm U}}
\newcommand{\lcx}{\prec_{\mathrm{cx}}}
\renewcommand{\ge}{\geqslant}
\renewcommand{\le}{\leqslant}
\renewcommand{\epsilon}{\varepsilon}
\renewcommand{\cdots}{\dots}
\renewcommand{\prec}{\preccurlyeq}
\def\D{\mathcal D}
\def\C{\mathcal C}
\newtheorem{theorem}{Theorem}
\newtheorem{corollary}[theorem]{Corollary}
\newtheorem{lemma}[theorem]{Lemma}
\newtheorem{proposition}[theorem]{Proposition}
\theoremstyle{definition}
\newtheorem{remark}{Remark}[section]
\numberwithin{equation}{section}
  \numberwithin{theorem}{section}
\renewcommand{\cite}{\citet}
\titlespacing{\section}{0pt}{8pt}{4pt}
\titlespacing{\subsection}{0pt}{6pt}{4pt}
\titlespacing{\subsubsection}{0pt}{5pt}{3pt}
\begin{document}

\title{Sums of Standard Uniform Random Variables}

\author{Tiantian Mao\thanks{Department of Statistics and Finance, University of Science and Technology of China, Hefei, Anhui 230026, China. Email: \texttt{tmao@ustc.edu.cn}.}\and Bin Wang\thanks{Corresponding author. RCSDS, NCMIS, Academy of Mathematics and Systems Science,
Chinese Academy of Sciences, Beijing 100190, China. Email: \texttt{wangbin@amss.ac.cn}} \and Ruodu Wang\thanks{Department of Statistics and Actuarial Science, University of Waterloo, Waterloo, ON N2L3G1, Canada. Email: \texttt{wang@uwaterloo.ca}.}}

\date{\today}

\maketitle

\begin{abstract}
In this paper, we analyze the set of all possible aggregate  distributions of the sum of standard uniform random variables,
a simply stated yet challenging problem in the literature of distributions with given margins. Our main results are obtained for two distinct cases.
In the case of dimension two, we obtain four partial characterization results.
For dimension greater or equal to three, we obtain a full characterization of the set of aggregate distributions, which is the first complete characterization result of this type in the literature for any choice of continuous marginal distributions.

~

\noindent \textbf{Keywords}: uniform distribution, aggregation, dependence uncertainty, joint mixability
\end{abstract}

\noindent\rule{\textwidth}{0.5pt}

\section{Introduction}

 Many questions remain open in the determination of probability measures with
given margins and other constraints, since the seminal work of \cite{S65}.
One of the challenging and recently active questions is, for $n$ given distributions $F_1,\cdots, F_n$ on $\R$, to determine all
possible distributions of $S_n = X_1 + \cdots + X_n$ where $X_1,\dots,X_n$ are random variables with respective distributions $F_1,\dots,F_n$ in an atomless probability space $(\Omega,\mathcal A,\p)$.
Formally, denote the set of possible distributions of the sum
$$
\mathcal D_n = \mathcal D_n(F_1,\ldots,F_n) = \left\{{\rm cdf~ of}~ X_1 + \cdots + X_n : X_i \sim F_i,\, i = 1,\ldots, n\right\}.
$$
where $X \sim F$ means that the cdf of a random variable $X$ is $F$.

The question of  characterizing $\mathcal D_n$, although simply stated, is a challenging open question.
Generally, it is not easy to determine whether a given distribution $G$ is in $\mathcal D_n$, although many moment inequalities can be used as necessary conditions.
In the recent literature, some papers partially address the question of $\mathcal D_n$ and provide sufficient conditions for $G\in \mathcal D_n$; see \cite{BJW14}, \cite{MW15} and \cite{WW16}.
A particular question is whether a point-mass belongs to $\mathcal D_n$, which is referred to the problem of joint mixability (\cite{WPY13}), and has found many applications in optimization and risk management.

Yet, there are no known results on the characterization of $\mathcal D_n$, except for the trivial case where each of $F_1,\dots,F_n$ is a Bernoulli distribution  in a low dimension.
Perhaps, the attempt to characterize $\mathcal D_n$ for generic  $F_1,\dots,F_n$ is too ambitious. In this paper,
we focus on the very special case where $F_1,\dots,F_n$ are standard uniform distributions $\rm U[0,1]$.
This problem might look naive at the first glance, but with  the technical challenges we shall see in this paper, the characterization of $\mathcal D_n$ is highly non-trivial even for uniform distributions.

As a well-known fact, for any random variables $X_1,\dots,X_n$, their sum is dominated in \emph{convex order} by $X_1^c+\dots+X_n^c$ where $X^c_i$ is identically distributed as  $X_i$, $i=1,\dots,n$, and $X^c_1,\dots,X^c_n$ are \emph{comonotonic}. Hence, the set  $\mathcal D_n(F_1,\ldots,F_n) $ is contained in the set $\mathcal C_n$ of distributions that are dominated in convex order by the distribution of the comonotonic sum (for the precise definitions, see Section \ref{sec:2}).
These two sets are asymptotically equivalent after normalization as shown by \cite{MW15}. One naturally wonders whether they coincide for a finite $n$.

The main results of this paper can be summarized below in two distinct cases.
Recall that $F_1,\dots,F_n$ are  standard uniform distributions $\rm U[0,1]$.
For dimension $n=2$, the sets $\mathcal D_n$ and $\mathcal C_n$ are not equivalent, and a complete determination of $\mathcal D_2$ for uniform margins is still unclear.
In Section \ref{sec:3}, we provide four results on equivalent conditions for various types of distributions to be in $\D_n$,  including unimodal, bi-atomic, and tri-atomic distributions, and distributions dominating a proportion of a uniform one.
In Section \ref{sec:4},  we are able to analytically characterize the set $\mathcal D_n$ for dimension $n\ge 3$ by showing that $\mathcal D_n=\mathcal C_n$.
 This result came as a pleasant surprise to us, since it is well known that the dependence structure gets much more complicated as the dimension grows.
 As far as we are aware of, this result is the first full characterization of $\mathcal D_n$ for any types of continuous marginal distributions.

For applications of the problem of possible distributions of the sum with specified marginal distributions,
  we refer to \cite{EPR13}, \cite{R13}, \cite{MFE15} and \cite{BBV18}. A particular problem on the sum of standard uniform random variables  is the aggregation of p-values from multiple statistical tests,  which are uniform by definition under the null hypothesis. These p-values, as obtained from different tests, typically have an unspecified  dependence structure, and hence it is important to understand the possible distributions of the aggregated $p$-value; see \cite{VW18}.
  
  We remark that the determination of whether $\mathcal D_n=\mathcal C_n$ for general margins is unclear. 
  Note that the determination of $\mathcal D_n=\mathcal C_n$  for a given tuple of margins requires more than the determination of joint mixability of the margins, and the latter is known to be an open question in general. 
  We conjecture that for general margins on bounded intervals  with sufficient smoothness, there is a dimension $n$ above which $\mathcal D_n$ equals $\mathcal C_n$, but this is out of reach by current techniques, as our proofs heavily rely on the specific form of uniform distributions.

\section{Preliminaries and notation}\label{sec:2}

In this paper, for any (cumulative) distribution function $F$, we denote by $F^{-1}(t) = \inf\{x :F(x) \ge t\}, ~t \in (0, 1]$, the quantile function of $F$. Denote by $\mathcal X$ the set of integrable random variables and $\mathcal F$ the set of distributions with finite mean. The terms ``distributions" and ``distribution functions" are treated as identical in this paper. For $F \in \mathcal F$, ${\rm Supp}(F)$ is the essential support of the distribution measure induced by  $F$, which will be referred to as the support of $F$.
For any distributions $F$, $G \in \mathcal F$, we denote by $F\oplus G$ the
distribution with quantile function $F^{-1}(t) + G^{-1}(t)$, $t\in [0,1]$. For a distribution $F$, $\mu(F)$ denotes the expectation of $F$. 
%
 Throughout ${\lceil x \rceil}$ and ${\lfloor x \rfloor}$ 
 represent for the ceiling and the  floor   
 of $x\in \R$, respectively.
 A  density function $f$ is  \emph{unimodal} if there exists   $a\in \R$ such that 
 $f  $  is increasing on $(-\infty, a] $ and decreasing on $[a,\infty)$.


The set $\mathcal D_n$ is related to the notion of convex order.
A distribution $F\in \mathcal F$ is \emph{smaller than $G\in\mathcal F$ in convex order}, denoted by  $F \prec_{\rm cx} G$,  if
$$
\int_\R   \phi(x)\d F(x) \le\int_\R\phi(x) \d G(x) ~~{\rm for~ all~ convex}~\phi: \R\to \R;
$$
provided that both expectations exist (finite or infinite). Standard references for convex order are  \cite{MS02} and \cite{SS07}.
For a given distribution $F \in \mathcal F$, denote by $\mathcal {C}(F)$ the set of all distributions dominated by
$F$ in convex order, that is,
$$
 \mathcal C(F) = \{G \in \mathcal F : G\prec_{\rm cx} F\}.
$$
Checking whether a distribution $G$ is in $\mathcal C(F)$ can be   conveniently done using  an equivalent condition (e.g.~Theorem 3.A.1 of \cite{SS07}) is
\begin{align}\label{eq:eqcx}
\inf_{k\in \R} \left(\int_\R (x-k)_+ \d F(x) -\int_\R (x-k)_+\d G(x)\right)\ge 0.
\end{align}

As the focus of this paper is the distribution of the sum of uniform random variables, we use the following simplified notation.
For $n\in \N$ and $x\in \R_+$,
 write
$$
   \mathcal D^{\rm U}_n   = \mathcal D_n({\rm U}[0,1],\ldots,{\rm U}[0, 1]) \mbox{~~~and~~~}
    \mathcal C^{\rm U}_x = \mathcal C({\rm U}[0, x]),
$$
where ${\rm U}[a,b]$ stands for the uniform distribution over an interval $[a,b]\subset \R$.

Below we list some basic properties of the sets $\mathcal D_n(\cdot)$ and $\mathcal C(\cdot)$; certainly, they hold also for
$\mathcal  D^{\rm U}_n$ and $\mathcal  C^{\rm U}_x$. First, note that if all distributions $F_1,\ldots,F_n$ are shifted by some constants or
scaled by the same positive constant, then the elements in $\mathcal D_n(F_1,\ldots,F_n)$ are also simply shifted
or scaled. Moreover, $\mathcal D_n(F_1,\ldots,F_n)$ is symmetric in the distributions $F_1,\ldots,F_n$. These facts
allow us to conveniently exchange the order of the distributions $F_1,\ldots,F_n$ and normalize these
distributions by shifts and a common scale. 
For given distributions $F_1,\ldots,F_n \in\mathcal F$, {the distribution $F_1\oplus \cdots \oplus F_n$ is the maximum in convex order in the set $\mathcal D_n(F_1,\dots,F_n)$.}
 This fact is summarized in the following lemma.
\begin{lemma}\label{lm-cxset-1}
For $F_1,\ldots,F_n\in\mathcal F$, $\mathcal D_n(F_1,\ldots,F_n)\subset \mathcal C(F_1\oplus \cdots \oplus F_n)$.
\end{lemma}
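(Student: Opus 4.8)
The plan is to use the classical fact that the comonotonic coupling is the convex-order maximum among all couplings with the prescribed margins, combined with the stop-loss characterization \eqref{eq:eqcx}. First I would realize the convex-order bound explicitly: let $U\sim\U[0,1]$ and set $X_i^c=F_i^{-1}(U)$, so that $X_i^c\sim F_i$ and $S^c:=X_1^c+\dots+X_n^c$ has quantile function $F_1^{-1}+\dots+F_n^{-1}$, i.e.\ $S^c\sim F_1\oplus\dots\oplus F_n$. Let $X_i\sim F_i$ be an arbitrary coupling with sum $S_n=X_1+\dots+X_n$, whose distribution is the generic element $G\in\mathcal D_n(F_1,\dots,F_n)$. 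Since $\E[S_n]=\sum_{i=1}^n\mu(F_i)=\E[S^c]$, the two distributions share the same finite mean, so by \eqref{eq:eqcx} it suffices to prove the stop-loss dominance
\begin{equation*}
\E[(S_n-d)_+]\le \E[(S^c-d)_+]\qquad\text{for every }d\in\R.
\end{equation*}

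The starting point is the subadditivity of $x\mapsto x_+$: for any reals $d_1,\dots,d_n$ with $d_1+\dots+d_n=d$,
\begin{equation*}
(S_n-d)_+=\Big(\sum_{i=1}^n (X_i-d_i)\Big)_+\le \sum_{i=1}^n (X_i-d_i)_+,
\end{equation*}
so that, taking expectations and using $X_i\sim X_i^c$,
\begin{equation*}
\E[(S_n-d)_+]\le \sum_{i=1}^n\E[(X_i-d_i)_+]=\sum_{i=1}^n\E[(X_i^c-d_i)_+]
\end{equation*}
for \emph{every} such splitting of $d$. The crux is to choose the thresholds $d_1,\dots,d_n$ so that this inequality turns into an \emph{equality} for the comonotonic sum. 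Since $(\sum_i a_i)_+=\sum_i (a_i)_+$ holds exactly when all the $a_i$ share a common sign, I want a level $p$ for which the signs of $X_i^c-d_i=F_i^{-1}(U)-d_i$ agree across $i$ for almost every $U$.

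To produce such a splitting, given $d$ I would locate a level $p\in[0,1]$ at which the nondecreasing quantile function $Q:=F_1^{-1}+\dots+F_n^{-1}$ of $S^c$ crosses $d$, and pick $d_i\in[F_i^{-1}(p),F_i^{-1}(p+)]$ with $\sum_{i=1}^n d_i=d$; such a choice exists by the intermediate value theorem, since $Q(p)=\sum_i F_i^{-1}(p)\le d\le \sum_i F_i^{-1}(p+)=Q(p+)$. Then monotonicity of each $F_i^{-1}$ forces $F_i^{-1}(U)-d_i\ge0$ for all $i$ when $U>p$, and $F_i^{-1}(U)-d_i\le0$ for all $i$ when $U\le p$; hence $(S^c-d)_+=\sum_{i=1}^n(X_i^c-d_i)_+$ almost surely, and therefore $\E[(S^c-d)_+]=\sum_{i=1}^n\E[(X_i^c-d_i)_+]$. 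Combining this with the upper bound of the previous paragraph yields the stop-loss inequality, and hence $G\prec_{\rm cx}F_1\oplus\dots\oplus F_n$.

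I expect the main obstacle to be the bookkeeping around the choice of $p$ and of the thresholds $d_i$, rather than any conceptual difficulty: one must handle the jumps and flat pieces of $Q$ so that every $d$ in the essential range of $S^c$ is matched by an admissible splitting, and separately dispose of the boundary regimes $d<\essinf S^c$ and $d\ge\esssup S^c$. In those two regimes no splitting is needed, because the elementary bounds $\essinf S^c=\sum_i\essinf X_i\le\essinf S_n$ and $\esssup S_n\le\sum_i\esssup X_i=\esssup S^c$ make the two stop-loss transforms coincide there ($\E[(S_n-d)_+]=\E[(S^c-d)_+]=\E[S_n]-d$ in the former, and both vanish in the latter).
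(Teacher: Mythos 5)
Your proposal is correct, but there is nothing in the paper to compare it against: the paper states Lemma \ref{lm-cxset-1} without proof, treating it as a classical fact and referring the reader to \cite{PW15} for its history. What you have reconstructed is essentially the standard argument (of Meilijson--N\'adas/R\"uschendorf type) behind that classical result: equality of means plus the stop-loss characterization \eqref{eq:eqcx}; the subadditivity bound $\E[(S_n-d)_+]\le\sum_{i=1}^n\E[(X_i-d_i)_+]$, valid for every splitting $\sum_{i=1}^n d_i=d$; and the comonotonic choice of thresholds $d_i\in[F_i^{-1}(p),F_i^{-1}(p+)]$ at a level $p$ where $Q=F_1^{-1}+\dots+F_n^{-1}$ crosses $d$, which turns the bound into an equality for $S^c$ because all summands $F_i^{-1}(U)-d_i$ then share the sign of $U-p$. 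The bookkeeping you flag is indeed routine: taking $p=\sup\{t\in(0,1):Q(t)\le d\}$ and using left-continuity of each $F_i^{-1}$ gives $Q(p)\le d\le Q(p+)$, a convex combination of the two endpoint vectors produces admissible $d_i$, and your disposal of the regimes $d\le\essinf S^c$ and $d\ge\esssup S^c$ (where the two stop-loss transforms equal $\E[S_n]-d$ and $0$, respectively) closes the remaining cases. One point worth making explicit: \eqref{eq:eqcx} by itself only encodes stop-loss (increasing convex) dominance, so your separate verification that $\E[S_n]=\sum_{i=1}^n\mu(F_i)=\E[S^c]$ is genuinely needed, and you supply it. Relative to the paper's bare citation, your route buys a self-contained proof that makes visible exactly where integrability of the $F_i$ and the definition of $\oplus$ enter; the paper's choice simply keeps the exposition short.
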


Lemma \ref{lm-cxset-1} can be equivalently stated as the following. {If $X_1\sim F_1,\ldots, X_n \sim F_n$ and
$F$ is the distribution of $X_1+\cdots+X_n$, then $F\prec_{\rm cx} F_1 \oplus \cdots  \oplus F_n$.} For a history of this result, see,
for instance, \cite{PW15}. In particular, if $F \in \mathcal D_n(F_1,\ldots,F_n)$, then the mean of
$F$ is fixed and is equal to the sum of the means of $F_1,\ldots,F_n$.

In view of  Lemma \ref{lm-cxset-1}, it would be  natural to investigate when the two sets coincide,
that is, $\mathcal D_n (F_1,\dots,F_n) = \mathcal C(F_1\oplus\dots\oplus F_n)$.
Note that for a given $G\in \mathcal F$, the determination of $G\in \mathcal C(F)$ can be analytically checked with its equivalent condition \eqref{eq:eqcx}.
 Hence, if the above two sets coincide, then we have an analytical characterization of  $\mathcal D_n (F_1,\dots,F_n)$.
In the case of uniform distributions, one wonders whether $\D^{\rm U}_n = \C^{\rm U}_n$, noting that $\D^{\rm U}_n \subset \C^{\rm U}_n$ always holds.  Unfortunately, as shown in \cite{MW15}
by a counter-example, in the simple case $n=2$, $\D_2^{\rm U}$ is an essential subset of $\C^{\rm U}_2$; see Theorem \ref{th-bi} in Section \ref{sec:3} for distributions in $\C^{\rm U}_2$ but not in  $\D_2^{\rm U}$.


Some basic properties of the set $\D_n(\cdot)$ are given in the following lemma.
\begin{lemma} \label{lm-180905-1}For any $F_1,\ldots, F_n\in \mathcal F$, the set $\D_n(F_1,\ldots,F_n)$ is non-empty, convex and closed with respect to weak convergence.
\end{lemma}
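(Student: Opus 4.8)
The plan is to establish the three properties of $\D_n(F_1,\dots,F_n)$ separately, since each requires a distinct idea. Throughout, I fix an atomless probability space $(\Omega,\mathcal A,\p)$, which supports a uniform random variable $U$; by atomlessness we may realize any desired joint law.

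\paragraph{Non-emptiness.} This is immediate: take $X_1,\dots,X_n$ to be comonotonic, e.g.\ $X_i = F_i^{-1}(U)$ for a single $U\sim\U[0,1]$. Then each $X_i\sim F_i$ and $X_1+\dots+X_n$ has a well-defined distribution (indeed equal to $F_1\oplus\dots\oplus F_n$), so $\D_n$ contains at least this element.

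\paragraph{Convexity.} The natural approach is a mixing (randomization) argument. Suppose $G_0,G_1\in\D_n$, realized respectively by tuples $(X_1^0,\dots,X_n^0)$ and $(X_1^1,\dots,X_n^1)$ with $X_i^j\sim F_i$. I would introduce an independent Bernoulli switch $B$ with $\p(B=1)=\lambda$, and set $X_i = X_i^B$, i.e.\ $X_i = \one_{\{B=0\}}X_i^0 + \one_{\{B=1\}}X_i^1$. The key point is that each $X_i$ still has marginal $F_i$, because conditional on $B=j$ it has law $F_i$ and this law does not depend on $j$, so unconditionally $X_i\sim F_i$. Then $S_n = X_1+\dots+X_n$ has distribution $\lambda G_1 + (1-\lambda)G_0$, giving the convex combination in $\D_n$. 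The only technical care needed is to make the switch $B$ independent of both tuples, which the atomless space accommodates (e.g.\ by enlarging on a product space); this is routine.

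\paragraph{Closedness under weak convergence.} This is the step I expect to be the main obstacle, since weak convergence of the sums does not by itself control the joint laws. The plan is to use a compactness/tightness argument at the level of couplings. Suppose $G_k\in\D_n$ and $G_k\to G$ weakly; choose realizations $(X_1^k,\dots,X_n^k)$ with $X_i^k\sim F_i$ and $X_1^k+\dots+X_n^k\sim G_k$. The sequence of \emph{joint} laws on $\R^n$ has fixed marginals $F_1,\dots,F_n$, hence is tight (each coordinate marginal is a fixed tight measure). By Prokhorov's theorem, a subsequence of the joint laws converges weakly to some law $\pi$ on $\R^n$; its marginals are the weak limits of the coordinate marginals, namely $F_1,\dots,F_n$, so $\pi$ is a valid coupling. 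Realizing $\pi$ by a tuple $(Y_1,\dots,Y_n)$ on the atomless space, the continuous map $(x_1,\dots,x_n)\mapsto x_1+\dots+x_n$ is continuous, so $Y_1+\dots+Y_n$ has law equal to the weak limit of the $G_k$ along the subsequence, which is $G$. Hence $G\in\D_n$. The delicate points are verifying tightness of the joint laws from fixed marginals (a standard marginal-tightness estimate) and the interchange of the subsequence limit with the continuous summation map (continuous mapping theorem); I would spell these out carefully, as the passage from convergence of joint laws to convergence of the pushforward under summation is where an incautious argument could fail.
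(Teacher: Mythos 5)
Your proof is correct. The paper itself states this lemma without proof, deferring such basic properties of $\mathcal D_n(\cdot)$ to Mao and Wang (2015), and your three-part argument --- a comonotonic realization for non-emptiness, an independent Bernoulli switch for convexity, and tightness of the couplings with fixed marginals combined with Prokhorov's theorem and the continuous-mapping theorem for closedness --- is exactly the standard route taken there. The points you flag as delicate are indeed fine: tightness follows from choosing compacts $K_i$ with $F_i(K_i)\ge 1-\epsilon/n$ so that every coupling puts mass at least $1-\epsilon$ on $K_1\times\cdots\times K_n$, and the atomless space supports the required joint constructions.
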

Another simple fact is that a distribution in $\D_n(F_1,\ldots, F_n)$ has to have the {correct support}
generated by the marginal distributions.
 That is, for any $F_1,\ldots, F_n\in \mathcal F$ and $G\in \D_n(F_1,\ldots,F_n)$, we have ${\rm Supp}(G)\subset \sum_{i=1}^n {\rm Supp}(F_i)$.
For more properties on the sets $\D_n(\cdot)$ and $\mathcal C(\cdot)$, see \cite{MW15}.

%
A useful concept for our analysis of $\mathcal D_n(\cdot)$ is the joint mixability introduced by  \cite{WPY13}.
  An $n$-tuple of distributions $(F_1,\ldots,F_n)\in \mathcal F^n$ is said to be \emph{jointly mixable} (JM), if  $  \mathcal D_n(F_1,\dots,F_n)$ contains a point-mass $\delta_K$, $K\in \R$. 
  This point-mass is unique if the distributions $F_1,\dots,F_n$ have bounded supports.
If $(F_1,\ldots,F_n)$ is JM and $F_1=\cdots=F_n=F$, then we say that $F$  is $n$-\emph{completely mixable} ($n$-CM). 


\section{Sums of two standard uniform random variables}\label{sec:3}

In this section, we look at the sum of two ${\rm U}[0, 1]$ random variables. Unfortunately, as explained above, a full characterization of $\D_2^{\rm U}$
appears difficult to obtain. In this section, we provide characterization results for four different types of distributions to be in $\D_2^{\rm U}$. We first present the main results on $\D_2^{\rm U}$ in Section \ref{sec:sec3.1}.  Their proofs will be given in Section \ref{sec:sec3.2}.
Before presenting our main findings, we summarize some existing results on $\D_2^{\rm U}$. These facts
can be derived from existing results on joint mixability in \cite{WW16}.

\begin{proposition} \label{pr-firstobs}
We have
\begin{itemize}
\item [(i)] $\D_2^{\rm U}\subsetneq \C^{\rm U}_2$.
\item [(ii)] Let $F$ be any distribution with a monotone density function on  ${\rm Supp}(F)$. Then $F\in \D_2^{\rm U}$ if and only if  ${\rm Supp}(F)\subset [0,2]$ and $F$ has mean $1$.
\item [(iii)] Let $F$ be any distribution with a unimodal and symmetric density function on ${\rm Supp}(F)$. Then $F\in \D_2^{\rm U}$ if and only if ${\rm Supp}(F)\subset [0,2]$ and $F$ has mean $1$.
\end{itemize}
\end{proposition}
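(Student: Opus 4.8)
The plan is to dispose of the necessity direction (common to (ii) and (iii)) immediately, then recast membership in $\D_2^{\rm U}$ as a joint mixability (JM) problem so that the monotone-density criterion of \cite{WW16} applies to (ii), reduce (iii) to (ii) by a mixture argument, and finally exhibit a separating example for (i). For necessity: if $F\in\D_2^{\rm U}$, then Lemma \ref{lm-cxset-1} gives $F\prec_{\rm cx}{\rm U}[0,2]$, so $F$ has mean $1$, while the support property noted after Lemma \ref{lm-180905-1} forces ${\rm Supp}(F)\subset[0,1]+[0,1]=[0,2]$; neither step uses the shape of the density, so both conditions are necessary in (ii) and (iii). The key reduction is this: writing $\check F$ for the law of $-Y$ with $Y\sim F$, the equivalence $X_1+X_2=Y\Leftrightarrow X_1+X_2+(-Y)=0$ shows that $F\in\D_2^{\rm U}$ if and only if the triple $({\rm U}[0,1],{\rm U}[0,1],\check F)$ is JM; since the three means sum to $\tfrac12+\tfrac12-1=0$ exactly when $F$ has mean $1$, the constant sum is forced to be $0$, so JM of the triple produces a genuine representation $X_1+X_2\sim F$ rather than a shifted copy.

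For the sufficiency in (ii), I would reflect if necessary so that $F$ has a monotone density on an interval $[a,b]\subset[0,2]$; then $\check F$ has a monotone density on $[-b,-a]$, and the two uniform marginals have constant (hence monotone) densities. I would then invoke the JM criterion for distributions with monotone densities from \cite{WW16}: for intervals $[a_i,b_i]$ of lengths $\ell_i$ and means $\mu_i$, JM holds if and only if $\sum_i a_i+\max_i\ell_i\le\sum_i\mu_i\le\sum_i b_i-\max_i\ell_i$. For our triple $\sum_i a_i=-b$, $\sum_i b_i=2-a$, $\sum_i\mu_i=0$ and $\max_i\ell_i=\max\{1,\,b-a\}$, so the criterion reduces to $\max\{1,b-a\}\le b$ and $\max\{1,b-a\}\le 2-a$. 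Both inequalities hold automatically: when $b-a\le1$ they follow from $a\le1\le b$ (a consequence of $1\in[a,b]$, as $F$ has mean $1$), and when $b-a>1$ they follow from $0\le a$ and $b\le2$. Hence the triple is JM and $F\in\D_2^{\rm U}$, which proves (ii); the boundary case $b-a=2$ recovers the comonotonic sum ${\rm U}[0,2]\in\D_2^{\rm U}$.

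For (iii) I would avoid working with a non-monotone density directly and instead use that a symmetric unimodal density on $[0,2]$ with mean $1$ is, by Khinchine's level-set representation, a mixture $F=\int_0^1{\rm U}[1-t,1+t]\,\d\nu(t)$ of symmetric uniforms; each component ${\rm U}[1-t,1+t]$ has a constant density, support $\subset[0,2]$, and mean $1$, hence lies in $\D_2^{\rm U}$ by part (ii). Since $\D_2^{\rm U}$ is convex and closed under weak convergence (Lemma \ref{lm-180905-1}), it is closed under mixtures, whence $F\in\D_2^{\rm U}$. For (i), the inclusion $\D_2^{\rm U}\subset\C_2^{\rm U}$ is Lemma \ref{lm-cxset-1}, so only strictness needs an example: take $G=\tfrac12\delta_a+\tfrac12\delta_{2-a}$ with $a\in(\tfrac12,\tfrac23]$. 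Checking \eqref{eq:eqcx} (by symmetry about $1$, only for $k\ge1$) gives $G\prec_{\rm cx}{\rm U}[0,2]$ precisely because $a\ge\tfrac12$, so $G\in\C_2^{\rm U}$; on the other hand, in any representation $X_1+X_2\sim G$ the constraint $X_2\in[0,1]$ forces $X_1<1-a$ onto the atom $a$ and $X_1>a$ onto the atom $2-a$, and matching the law of $X_2$ on $[0,1-a)$ then forces the atom $a$ to carry all of $X_1\in[0,a]$, giving it probability $a\ne\tfrac12$, a contradiction. Thus $G\notin\D_2^{\rm U}$; alternatively this is immediate from the bi-atomic characterization in Theorem \ref{th-bi}.

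The main obstacle is the sufficiency direction, and within it the correct identification and application of the monotone-density JM criterion of \cite{WW16}: the entire content of (ii) is the verification that its two endpoint inequalities hold across the full range of admissible supports $[a,b]\subset[0,2]$ with $1\in[a,b]$, with the two regimes $b-a\le1$ and $b-a>1$ treated separately. Once (ii) is established, the Khinchine mixture step for (iii) and the bi-atomic obstruction for (i) are comparatively routine.
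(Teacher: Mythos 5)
Your proof is correct. The paper gives no written argument for this proposition---it only remarks that these facts ``can be derived from existing results on joint mixability in \cite{WW16}''---and your part (ii) is exactly that derivation made explicit: the reduction ``$F\in\D_2^{\rm U}$ if and only if $({\rm U}[0,1],{\rm U}[0,1],\check F)$ is jointly mixable'' (with the constant pinned to $0$ by the mean-$1$ assumption), followed by the mean-condition criterion of \cite{WW16} for monotone densities, checked correctly in the two regimes $b-a\le 1$ (where $a\le 1\le b$ comes from the mean being in the support) and $b-a>1$ (where $[a,b]\subset[0,2]$ suffices). Where you genuinely differ is in (iii) and (i). For (iii), the citation the paper intends is the analogous joint-mixability theorem of \cite{WW16} for unimodal-symmetric densities, applied to the same triple; you instead reduce (iii) to (ii) through Khinchine's representation of a symmetric unimodal law as a mixture $\int_0^1 {\rm U}[1-t,1+t]\,\d\nu(t)$, together with convexity and weak closure of $\D_2^{\rm U}$ (Lemma \ref{lm-180905-1}). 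That route is self-contained once (ii) is in hand, needs no further input from \cite{WW16}, and incidentally identifies the symmetric uniforms of the Remark following the proposition as the extreme points of the class in (iii). For (i), the paper defers to the counterexample of \cite{MW15} (and, a posteriori, to Theorem \ref{th-bi}); your two-point construction $G=\frac12\delta_a+\frac12\delta_{2-a}$ with $a\in(\frac12,\frac23]$ is a correct self-contained substitute: the tangency computation shows $G\in\C^{\rm U}_2$ exactly when $a\ge\frac12$, and the obstruction argument is sound because $a\le\frac23$ guarantees $2a-1\le 1-a$, so the event $\{X_1+X_2=a\}$ contains the disjoint pieces $\{X_1\le 2a-1\}$ and $\{X_2<1-a\}$, forcing its probability to be at least $(2a-1)+(1-a)=a>\frac12$, contradicting the prescribed mass $\frac12$. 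No step is missing in any of the three parts.
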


\begin{remark}
For a uniform distribution on an interval of length $a$, ${\rm U}[1-\frac a 2,1+\frac a 2]\in\mathcal D_2^{\rm U}$ if and only if $a\in [0,2]$, a special case of (ii) and (iii) of Proposition \ref{pr-firstobs}.  The case ${\rm U}[\frac 12,\frac 32]\in \mathcal D_2^{\rm U}$ is shown by \cite{R82}, and the general case $a\in [0,2]$ is shown by \cite{WW16}.
\end{remark}

\subsection{Main results} \label{sec:sec3.1}


As a first new result in this paper, we show that the class of distributions with a unimodal density with the correct mean is contained in $\D_2^{\rm U}$.

\begin{theorem}\label{main-th} Let $F$ be a distribution with a unimodal density on $[0, 2]$ and mean $1$. Then $F\in \D_2^{\rm U}$.
\end{theorem}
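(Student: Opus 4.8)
The plan is to construct the coupling explicitly by conditioning on the value of the sum, using symmetry to collapse the two marginal constraints into one. Writing $S=X_1+X_2$, I introduce $Y=S/2$ and $R=(X_1-X_2)/2$, so that $X_1=Y+R$ and $X_2=Y-R$; the requirement $X_1,X_2\in[0,1]$ is exactly $|R|\le \rho(Y)$ with $\rho(y)=\min(y,1-y)$. If I prescribe the conditional law of $R$ given $S$ to be symmetric about $0$, then $(X_1,X_2)\laweq(X_2,X_1)$, so $X_1$ and $X_2$ share one marginal and it suffices to force $X_1\sim\U[0,1]$. Since $F$ has a unimodal density on $[0,2]$ with mean $1$, the law of $Y$ has a unimodal density $g$ on $[0,1]$ with mean $1/2$.

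This reduces the theorem to a one-dimensional statement: for any such $g$ there is a Markov kernel $y\mapsto\mathcal L(R\mid Y=y)$, symmetric about $0$ and supported in $[-\rho(y),\rho(y)]$, such that $Y+R\sim\U[0,1]$ when $Y\sim g$. Equivalently, I seek a symmetric, band-limited dilation transporting $g$ onto $\U[0,1]$. A necessary condition is $g\prec_{\rm cx}\U[0,1]$, since the kernel is mean-preserving given $Y$ and hence a martingale coupling; this holds because a unimodal density on $[0,1]$ with mean $1/2$ is automatically dominated in convex order by the uniform (unimodality forbids the extra endpoint mass that greater spread would demand, and a monotone density with mean exactly $1/2$ must be the uniform itself). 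Thus the obstruction is never the convex order but the two geometric side constraints: symmetry of the kernel and the band $|R|\le\rho(Y)$.

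When $g$ is symmetric about $1/2$ this construction recovers the symmetric case of Proposition \ref{pr-firstobs}(iii), but the asymmetric mode is exactly what makes the problem genuinely global. One cannot simply spread each horizontal layer of $g$ (each a uniform on a nested interval around the mode) onto $\U[0,1]$ separately, because a symmetric kernel preserves the center of each layer, and an off-center layer can never be mapped to the centered uniform; the balancing must therefore be carried out for the whole density at once. To organize the construction I would first invoke that $\D_2^{\rm U}$ is convex and closed under weak convergence (Lemma \ref{lm-180905-1}) to approximate $g$ weakly by unimodal densities taking finitely many values, reducing to the case where the superlevel sets are finitely many nested intervals, and then define the kernel by an explicit outward water-filling (or a monotone, quantile-based coupling) that pushes mass symmetrically while respecting the band.

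The hard part will be the feasibility of this locality-constrained symmetric transport near the endpoints $0$ and $1$, where $\rho$ degenerates to $0$: mass must be brought arbitrarily close to the endpoints to lift $g$ (which vanishes there) up to the uniform level, yet only nearby sources may contribute, and the global allocation must land exactly on $\U[0,1]$ while remaining symmetric at every point. I expect the crux to be an existence argument for the kernel, most cleanly an explicit monotone construction together with a verification that the unimodality of $g$—which forces the endpoint mass to be small relative to $\rho$—keeps the band wide enough to carry the required flow.
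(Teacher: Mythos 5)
Your reduction is sound, and it is in fact an equivalence: by symmetrizing any coupling (swap $(X_1,X_2)$ using an independent fair coin) one may always assume exchangeability, so $F\in\D_2^{\rm U}$ holds if and only if a kernel of the kind you describe exists. Your auxiliary observations are also correct: $Y=S/2$ has a unimodal density $g$ on $[0,1]$ with mean $1/2$, and such a $g$ satisfies $g\prec_{\rm cx}\U[0,1]$ (this follows cleanly from the cut criterion, since $\{g>1\}$ is an interval and hence $1-g$ has sign pattern $+,-,+$; your stated justification is looser than that). But after this reformulation the proof stops. The existence of a symmetric, band-limited dilation kernel carrying $g$ onto $\U[0,1]$ is not a technical remainder --- it is exactly the content of the theorem restated in new coordinates, and you concede this yourself (``the hard part will be the feasibility\dots'', ``I expect the crux to be an existence argument for the kernel''). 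The devices you name, ``outward water-filling'' and a ``monotone, quantile-based coupling'', are never constructed, and no verification is attempted that any such rule simultaneously respects the symmetry, the band $|R|\le\rho(Y)$, and the target marginal; no off-the-shelf result (e.g.\ Strassen's dilation theorem) applies, because those give mean-preserving kernels with no support or symmetry constraint. So the proposal is a correct change of variables plus an accurate diagnosis of the difficulty, not a proof.

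For comparison, the paper resolves precisely this missing step by a concrete mechanism: it discretizes $F$ into a unimodal step density (adding an independent $\U[0,1]$ to $\lfloor m(X-1)\rfloor$), and proves by induction on the number of steps that such step distributions lie in $\D_2(\U[-c,c],\U[-c,c])$. Each induction step peels off a monotone step density $H$, writes the target as a mixture $\lambda H+(1-\lambda)F_0$ with $F_0$ unimodal on a shorter support, handles $H$ via the joint mixability theorem for monotone densities (Theorem 3.2 of \cite{WW16}), and closes the induction using convexity and closedness of $\D_2$ (Lemma \ref{lm-180905-1}). Some input of this kind --- either an external mixability theorem or an explicit inductive construction --- is what your plan lacks; if you pursued your route, the natural completion would be to prove kernel existence for unimodal step densities by exactly such an induction, at which point you would essentially be reproducing the paper's argument inside your coordinates.
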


Both the two previous results in Proposition \ref{pr-firstobs} (ii) and (iii) are special cases of Theorem \ref{main-th};
thus Theorem \ref{main-th} generalizes the existing results derived from \cite{WW16}.

The second result of the paper concerns the class of
distributions which dominate a proportion of a uniform distribution.

{\begin{theorem}\label{pr-contines}
Let $F$ be a distribution supported in $[a,a+b]$ with mean $1$ and density function $f$. If there exists $h>0$ such that $f>3b/(4h)$ on $[1-h,1+h]$, then $F\in { \D}_2^{\rm U} $.
\end{theorem}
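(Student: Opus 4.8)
The plan is to realize $F$ as a mixture (a convex combination, possibly a weak limit) of distributions already known to lie in $\D_2^{\rm U}$, and then invoke the convexity and weak closedness of $\D_2^{\rm U}$ recorded in Lemma \ref{lm-180905-1}. The natural family of admissible building blocks is supplied by Theorem \ref{main-th}: every distribution with a unimodal density on $[0,2]$ and mean $1$ lies in $\D_2^{\rm U}$. Before constructing the mixture I would first extract the support information hidden in the hypothesis. Since $f>3b/(4h)$ on $[1-h,1+h]$, the mass placed there exceeds $2h\cdot 3b/(4h)=3b/2$; as the total mass is $1$, this forces $b<2/3$. Because $1\in\mathrm{Supp}(F)$ and $\mathrm{Supp}(F)=[a,a+b]$, we get $a\in[1-b,1]$ and hence $\mathrm{Supp}(F)\subset[1-b,1+b]\subset(1/3,5/3)\subset(0,2)$. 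Thus the necessary ``correct support'' condition holds with room to spare, and every auxiliary density I build around the centre $1$ is automatically supported in $[0,2]$.

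The construction itself treats the dense central block as \emph{fuel}. Writing $c=3b/(4h)$, the excess $f-c\one_{[1-h,1+h]}\ge 0$ is available to be redistributed. I would decompose the tail mass of $F$ (the part outside $[1-h,1+h]$) into thin slices and, for each slice sitting at a signed distance $d$ from the centre, pair it with an appropriate amount of central fuel so that the combined piece is a \emph{unimodal}, mean-$1$ density supported in $[0,2]$ (a trapezoidal or triangular profile anchored at $1$, together with a compensating contribution on the opposite side of the centre to fix the mean, does the job). Summing these pieces, together with whatever unimodal remainder is left in the centre, exhibits $F$ as the required mixture. The role of the specific threshold $3b/(4h)$ is to guarantee a \emph{feasible fuel budget}: the total central density withdrawn to symmetrise and unimodalise all the tail slices must never exceed the density actually present on $[1-h,1+h]$, and the worst case is a tail element at the maximal distance $d\approx b$, which demands the largest counter-mass near the centre. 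Balancing this demand against the available height $c$ is what produces the constant.

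The main obstacle is precisely this bookkeeping in the second step. One must arrange the pairing so that (i) each assembled piece is genuinely unimodal with mean exactly $1$, (ii) the cumulative withdrawal from $[1-h,1+h]$ stays pointwise below $f$, ensuring nonnegativity of the leftover, and (iii) the generic asymmetry of $f$ between the left and right tails is absorbed, since the two sides need not carry equal mass. A cleaner route to the same constant, which I would keep in reserve, is to abandon the mixture picture and instead build an explicit copula density $\rho$ on $[0,1]^2$ with uniform marginals whose anti-diagonal push-forward equals $F$; there the bound $f>3b/(4h)$ reappears as a capacity (Hall/Strassen-type) condition ensuring that enough mass can be routed onto the long central anti-diagonals to keep both marginals uniform, and the extremal distance $d\approx b$ again dictates the threshold.
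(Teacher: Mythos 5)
Your central mechanism---writing $F$ as a mixture (or weak limit of mixtures) of unimodal, mean-$1$ densities on $[0,2]$ and invoking Theorem \ref{main-th} together with Lemma \ref{lm-180905-1}---cannot work, because distributions satisfying the hypothesis of Theorem \ref{pr-contines} need not admit any such representation. The hypothesis constrains $f$ only on $[1-h,1+h]$; it says nothing about the tails. Take, for instance, $F$ symmetric about $1$ whose density is slightly above $3b/(4h)$ on $[1-h,1+h]$, vanishes on the gaps $(1+h,\,1+b/2-\epsilon)$ and $(1-b/2+\epsilon,\,1-h)$, and carries two thin spikes of mass roughly $(1-3b/2)/2$ near $1\pm b/2$. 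In an exact mixture $f=\sum_i \alpha_i g_i$ the weighted components satisfy $\alpha_i g_i\le f$ pointwise, so any component carrying mass from the right spike must vanish on the right gap; unimodality then forces it to vanish on all of $(-\infty,\,1+b/2-\epsilon)$ (a unimodal function that is zero at a point and positive to its right is increasing up to its mode, hence zero everywhere to the left of that point), so its mean exceeds $1+b/2-\epsilon>1$, a contradiction. Passing to weak limits does not repair this: for a unimodal $g$, the density on the gap dominates the minimum of its peak densities on the centre and on the spike, so no component can carry non-negligible mass both near the spike and anywhere left of the gap while keeping its gap mass small, and components confined to the right of the gap have mean bounded away from $1$. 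Hence this $F$ lies outside even the weak closure of the convex hull of unimodal mean-$1$ laws; the ``trapezoidal piece anchored at $1$'' you describe cannot bridge a region where $f$ vanishes, and your fallback copula/capacity sketch is a restatement of the problem (membership in $\D_2^{\rm U}$ \emph{is} the existence of such a coupling), not an argument.

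Your ``fuel'' intuition does match the paper's starting point: the proof there peels off $\lambda H$ with $H=\U[1-h,1+h]$ and $\lambda=3b/2$, and must then handle $G=(F-\lambda H)/(1-\lambda)$, which is an essentially arbitrary mean-$1$ distribution on $[a,a+b]$. But the pieces of $G$ are treated as \emph{bi-atomic}, not unimodal: the paper discretizes $G$, inducts on the number of atoms, and for a mean-$1$ bi-atomic piece on $\{1-c,1+d\}$ uses Lemma \ref{lm-180228-1}, which places it in $\D_2(\U[0,k(c+d)],\U[1-k(c+d),1])$ precisely because the sub-uniform length $k(c+d)$ is an integer multiple of the gap $c+d$. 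Each $\U[0,1]$ margin is split into complementary sub-uniforms; the parts of length $k(c+d)$ build the bi-atomic piece, while the leftover parts of length $1-k(c+d)\ge h$ are exactly long enough to manufacture copies of $H$ (here Theorem \ref{main-th} is used only to certify $H\in\D_2^{\rm U}$). The threshold $3b/(4h)$ enters solely to guarantee that an integer $k$ with $1-\lambda\le k(c+d)\le 1-h$ exists, since $\lambda-h\ge c+d$. So the missing idea in your proposal is this integrality/splitting mechanism of Lemma \ref{lm-180228-1}, which is what lets non-unimodal (indeed discrete, gapped) tail configurations be assembled from uniform margins.
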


In Theorem \ref{pr-contines}, the condition that the density of $F$ dominates $3b/2$ times that of ${\rm U}[1-h,1+h]$ immediately  implies the following admissible ranges 
of $a$, $b$ and $h$: 
$a\ge 1/3$, $b\le 2/3$, and $h\le 1/3$. 
Hence, $\mathrm{Supp}(F)\subset [0,2]$, which is obviously necessary for $F\in   \D_2^{\rm U}(1, 1)$.
Theorem \ref{pr-contines} also  immediately implies the following fact: For  a distribution $F$ with mean $0$ and bounded support, if $F$ has a positive
density $f > \epsilon$ in a neighbourhood of $0$ for some $ \epsilon> 0$, then $F \in \D_2(\mathrm U[-m,m],\mathrm U[-m,m])$ for sufficiently large
$m > 0$.


In addition to the two results on continuous distributions, we analyze discrete distributions.
We shall obtain two results, one characterizing bi-atomic distributions in $\D_2^{\rm U} $, and one characterizing equidistant tri-atomic distributions in  $\D_2^{\rm U} $.

 \begin{theorem} \label{th-bi}Let $F$ be a bi-atomic distribution with mean $1$ supported on $\{a,a + b\}$ with $b>0$. Then $F \in \D^{\rm U}_2  $ if and only if $1/b \in
 \N$.
\end{theorem}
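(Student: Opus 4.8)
The plan is to reduce membership in $\D_2^{\rm U}$ to the existence of a single measurable function and then analyze that function by Fourier series. Writing the mass of $F$ at $a$ as $p$, the mean-one condition gives $a=1-(1-p)b$, and for a genuinely bi-atomic law ($p\in(0,1)$) one has $a<1<a+b$ with $1-a=(1-p)b$ and $a+b-1=pb$. Any coupling $(X,Y)$ with $X,Y\sim\U[0,1]$ and $X+Y\in\{a,a+b\}$ almost surely is described by the conditional probability $\phi(x)=\p(X+Y=a\mid X=x)\in[0,1]$: given $X=x$, one takes $Y=a-x$ with probability $\phi(x)$ and $Y=a+b-x$ with probability $1-\phi(x)$. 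Thus $F\in\D_2^{\rm U}$ if and only if there is a measurable $\phi:[0,1]\to[0,1]$ with (a) $Y\in[0,1]$ almost surely, which forces $\phi=0$ on $(a,1]$ and $\phi=1$ on $[0,a+b-1)$; (b) $\int_0^1\phi(x)\d x=p$; and (c) $Y\sim\U[0,1]$.

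The engine is a Fourier reformulation of (c). A short computation of the characteristic function of $Y$, using $\E[e^{-2\pi\mathrm i kX}]=0$ for $k\neq0$, gives $\E[e^{2\pi\mathrm i kY}]=e^{2\pi\mathrm i ka}\,\hat\phi(k)\,(1-e^{2\pi\mathrm i kb})$, where $\hat\phi(k)=\int_0^1\phi(x)e^{-2\pi\mathrm i kx}\d x$. Hence (c) holds if and only if $\hat\phi(k)=0$ for every integer $k\neq0$ with $kb\notin\mathbb Z$. For necessity, suppose such a $\phi$ exists. If $b$ is irrational then $\hat\phi(k)=0$ for all $k\neq0$, so $\phi\equiv p$, contradicting $\phi=0$ on the nonempty interval $(a,1]$. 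If $b=q/m$ in lowest terms, then $kb\in\mathbb Z\iff m\mid k$, so $\phi$ has Fourier support in $m\mathbb Z$ and therefore period $1/m$. But $\phi=1$ on $[0,pb)$, an interval of length $pq/m$, and $\phi=0$ on $(a,1]$, of length $(1-p)q/m$; if $pq\ge1$ this forces $\phi\equiv1$ and if $(1-p)q\ge1$ it forces $\phi\equiv0$, each contradicting the opposite support constraint, while $pq<1$ and $(1-p)q<1$ add to $q<2$. Therefore $q=1$, i.e. $1/b=m\in\N$.

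For sufficiency, given $b=1/m$ I would exhibit the $1/m$-periodic indicator $\phi=\one_E$ with $E=\{x\in[0,1]:\{mx\}<p\}$, where $\{\cdot\}$ denotes fractional part. It is immediate that $\phi$ has period $1/m$, that $\int_0^1\phi(x)\d x=p$, and that it satisfies the two support constraints in (a). Uniformity of $Y$ then follows from the Fourier criterion, since period $1/m$ kills exactly the coefficients $\hat\phi(k)$ with $m\nmid k$; alternatively, a direct tiling argument works, as the reflections $x\mapsto a-x$ and $x\mapsto a+b-x$ send the $m$ stripes on which $\phi=1$, respectively $\phi=0$, onto the complementary families $\bigcup_{j=0}^{m-1}[j/m,(j+p)/m)$ and $\bigcup_{j=0}^{m-1}[(j+p)/m,(j+1)/m)$, whose union is $[0,1)$.

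The main obstacle is the step in necessity from ``$Y$ is uniform'' to the arithmetic condition $1/b\in\N$: the Fourier identity by itself yields only periodicity of $\phi$, and it is the interplay of that periodicity with the rigid boundary conditions $\phi=1$ near $0$ and $\phi=0$ near $1$ --- exactly the support constraints that separate $\D_2^{\rm U}$ from the larger convex-order set $\C^{\rm U}_2$ --- that pins down $q=1$. Isolating $\phi$ and its Fourier coefficients as the right variables, so that these constraints reduce to a simple length count, is the crucial move.
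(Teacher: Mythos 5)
Your argument is correct, and it reaches the result by a genuinely different route from the paper. The paper does not prove Theorem \ref{th-bi} directly: it rescales via $Z\mapsto T(Z-1)$ with $T=1/b$ so that the two atoms differ by exactly $1$, and then invokes Lemma \ref{lm-180228-1}(ii) on $\D_2(\U[0,T],\U[-T,0])$; there, necessity of $T\in\N$ comes from computing the mod-$1$ density $g_X$ of $X\sim\U[0,T]$ explicitly and exploiting the invariance $g_X(x)=g_X(x-b)$ forced by $X+Y$ being constant mod $1$, while sufficiency is an explicit stripe construction $[Y\mid X\in[k,k+b]]=b-1-X$, $[Y\mid X\in(k+b,k+1]]=b-X$. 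You instead stay at the original scale, parametrize every admissible coupling by the conditional kernel $\phi(x)=\p(X+Y=a\mid X=x)$, and turn uniformity of $Y$ into the statement that $\hat\phi(k)=0$ whenever $kb\notin\mathbb{Z}$; necessity then follows from the resulting $1/m$-periodicity of $\phi$ (for $b=q/m$ in lowest terms) played against the hard support constraints $\phi=1$ on $[0,pb)$ and $\phi=0$ on $(a,1]$, via the length count $pq<1$, $(1-p)q<1$, hence $q=1$, with the irrational case dispatched because $\phi$ would be a.e.\ constant. Your sufficiency construction ($\phi=\one_E$ with $E=\{x:\{mx\}<p\}$) is, after rescaling, the same stripe coupling as the paper's. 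What your route buys is a self-contained proof of the bi-atomic theorem with the ``right'' variable isolated (the kernel $\phi$ and its Fourier support), making the obstruction transparent; what the paper's route buys is generality: Lemma \ref{lm-180228-1} also treats the tri-atomic case ($k=2$) and the dichotomy ``one of $b$, $T$ is an integer,'' which is what Theorem \ref{th-tri} needs, so the same machinery serves both theorems. The only caveats in your write-up are cosmetic: the constraints on $\phi$ and the conclusions from periodicity hold almost everywhere rather than pointwise (which is all the contradictions require, since the relevant intervals have positive length, including in the degenerate situations $a<0$ or $a+b>2$ where the clipped intervals contain a full period), and the condition $\int_0^1\phi=p$ is in fact implied by the marginal and mean constraints.
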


For a given $a\in [0,1)$ and $a+b\in (1,2]$, there is a unique distribution on $\{a,a+b\}$ with mean $1$. Hence, all the bi-atomic distributions that belong to $\D^{\rm U}_2  $
have the corresponding distribution measures
$$
 \Big\{\nu :~~1-\nu(\{a\})=\nu(\{a+1/k\}) = (1-a)k,~k\in\N,~a\in [0,1)\Big\}.
$$
Note that  many bi-atomic distributions supported on $\{a,a+b\}$ are in $\C^{\rm U}_2$ but not in  $\D_2^{\rm U} $, as long as $1/b\not\in \N$.
For example, one can choose a bi-atomic distribution $F$  with equal probability on $\{1-1/\pi,1+ 1/\pi\}$, and easily see that $F\in\C^{\rm U}_2$, whereas from Theorem \ref{th-bi}  we find that   $F$ is not in $\D_2^{\rm U} $.
Thus, Theorem \ref{th-bi} implies   $\D_2^{\rm U} \subsetneq \C^{\rm U}_2$, a fact as noted by \cite{MW15}.

For a tri-atomic distribution $F$, write $F = (f_1, f_2, f_3)$ where $f_1, f_2, f_3$ are the probability
masses of $F$. Note that on given three points, the set of tri-atomic distributions with mean $1$
has one degree of freedom. For tractability, we study the case of $F$ having an \emph{equidistant support} in the form of
$\{a-b,\,a,\,a+b\}$ for some $b>0$. We only consider the case $b\le a \le 1$ since the case $a > 1$ is symmetric.


To state our characterization of tri-atomic distribution in  $\mathcal D_2^{\mathrm U}$, we introduce the following notation.
For $x > 0$, define a \emph{measure of non-integrity}
$$
  \lceil x \rfloor = \min \left\{\frac{\lceil x \rceil}{x} - 1, 1 - \frac{\lfloor x \rfloor}{x}\right\}\in [0, 1].
$$
Obviously $\lceil x \rfloor=0 \Leftrightarrow  x \in \N$.
\begin{theorem}\label{th-tri}
Suppose that $F = (f_1, f_2, f_3)$ is a tri-atomic distribution with mean $1$ supported
in $\{a - b, a, a + b\}$  and $0<b\le a \le1$. Then $F \in \mathcal D_2^{\mathrm U} $ if and only if it is the following three cases.
\begin{itemize}
\item [(i)] $a=1$ and $f_2\ge \lceil \frac1{2b}\rfloor$.
\item [(ii)] $a<1$ and $\frac1{2b}\in \N$.
\item [(iii)] $a<1$, $\frac1{2b}-\frac12\in \N$ and $f_2\ge a+b-\frac12$.
\end{itemize}
\end{theorem}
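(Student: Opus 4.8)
The plan is to translate membership in $\D_2^{\rm U}$ into a concrete analytic problem about three densities on $[0,1]$. Writing $c_1=a-b$, $c_2=a$, $c_3=a+b$, a distribution $F=(f_1,f_2,f_3)$ on $\{c_1,c_2,c_3\}$ lies in $\D_2^{\rm U}$ if and only if there is a coupling $\pi$ of two $\U[0,1]$ variables whose sum is concentrated on the diagonal segments $\{x+y=c_j\}\cap[0,1]^2$, with $\pi$ assigning mass $f_j$ to the $j$-th segment. Parametrising the restriction of $\pi$ to the $j$-th segment by its $x$-coordinate gives nonnegative functions $p_1,p_2,p_3$ on $[0,1]$, the conditional probabilities that the sum equals $c_j$ given $X_1=x$. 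The two marginal constraints read $p_1(x)+p_2(x)+p_3(x)=1$ and $p_1(c_1-y)+p_2(c_2-y)+p_3(c_3-y)=1$; the supports are forced into $[0,c_1]$, $[0,c_2]$ and $[a+b-1,1]$, using $a+b\ge 1$ (which follows from $\mu(F)=1\le a+b$), and $\int p_j=f_j$. Since transposing $\pi$ sends $p_j(x)$ to $p_j(c_j-x)$ and preserves every constraint, convexity of $\D_2$ (Lemma~\ref{lm-180905-1}) lets me symmetrise and assume each $p_j$ is symmetric about $c_j/2$; when $a=1$ the distribution is automatically symmetric ($f_1=f_3$), so I may additionally impose $p_3(x)=p_1(1-x)$.

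Next I would extract the arithmetic content. Reflecting $\sum_j p_j=1$ about $a/2$ and using $p_1(a-x)=p_1(x-b)$, $p_2(a-x)=p_2(x)$, $p_3(a-x)=p_3(x+b)$ yields, for $x\in[0,a]$, the recursion
\begin{equation*}
p_1(x)-p_1(x-b)=p_3(x+b)-p_3(x).
\end{equation*}
Combined with the boundary facts forced by the support of $p_2$ (for $a<1$: $p_3\equiv 1$ on $(a,1]$ and, by symmetry, on $[a+b-1,b)$), this reduces the problem to choosing two symmetric nonnegative profiles $p_1$ (on $[0,a-b]$, mass $f_1$) and $p_3$ (on $[a+b-1,1]$, mass $f_3$, equal to $1$ off its central block $[b,a]$) obeying the recursion and the pointwise bound $p_1+p_3\le 1$ on $[0,a]$; then $p_2:=1-p_1-p_3$ is determined with mass $f_2=1-f_1-f_3$. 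The recursion propagates values of $p_1$ and $p_3$ across a lattice of step $b$, and the requirement that this propagation be consistent with both reflection symmetries and close up inside $[0,1]$ is what forces $1$ to be an integer or half-integer multiple of $2b$, giving the dichotomy $\tfrac1{2b}\in\N$ versus $\tfrac1{2b}-\tfrac12\in\N$.

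For sufficiency I would give explicit piecewise-constant solutions case by case. When $\tfrac1{2b}\in\N$ (cases (i) with $f_2$ large enough, and (ii)) the lattice $b\Z$ tiles $[0,1]$ into an even number of blocks, and I can build $p_1,p_3$ as step functions constant on successive length-$b$ intervals, using the endpoint-forced values and filling the central block so the masses come out to $f_1,f_3$; the pattern is the one in the worked example $a=\tfrac34,\ b=\tfrac12$ (endpoints force $p_3=1$, symmetry propagates this, the remaining mass sits in the middle). When $\tfrac1{2b}-\tfrac12\in\N$ (case (iii)) the natural step is $2b$ with a half-step offset, so the construction uses length-$b$ blocks arranged antisymmetrically, and the inequality $f_2\ge a+b-\tfrac12$ is exactly what supplies enough mass in $p_2$ to keep $p_1+p_3\le1$ through the offset. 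For $a=1$ the symmetric reduction leaves a single profile $p_1$ to choose; the pointwise bound $p_1(x)+p_1(1-x)\le1$ caps the admissible mass $f_1$, and the extremal staircase dictated by the recursion leaves room precisely when $f_2\ge\lceil\tfrac1{2b}\rfloor$.

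The necessity direction is where I expect the main difficulty: one must show that \emph{no} coupling exists when the integrality conditions fail or $f_2$ falls below the stated thresholds, i.e. that the local recursion together with the support and symmetry constraints is genuinely obstructive, not merely resistant to the particular constructions. The plan is to iterate the recursion along the step-$b$ lattice starting from the endpoint-forced values and track the accumulated discrepancy: when $\tfrac1{2b}\notin\N$ the propagated profile cannot simultaneously be symmetric about $c_1/2$ and about $c_3/2$ while equalling $1$ on the forced endpoint blocks, which rules out cases (ii)–(iii) off the integer/half-integer lattice; in the half-integer regime the same bookkeeping shows the minimal admissible $p_2$ carries mass exactly $a+b-\tfrac12$, and in the $a=1$ regime exactly $\lceil\tfrac1{2b}\rfloor$. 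Turning these ``accumulated discrepancy'' estimates into the sharp lower bounds on $f_2$, and excluding non-piecewise-constant solutions, is the technical heart of the argument; the consistency check that setting $f_2=0$ recovers the bi-atomic gap-$2b$ criterion of Theorem~\ref{th-bi} serves as a useful guide throughout.
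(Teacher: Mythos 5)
Your reduction of membership in $\D_2^{\rm U}$ to the kernel profiles $p_1,p_2,p_3$ on $[0,1]$, together with the symmetrization and the step-$b$ recursion $p_1(x)-p_1(x-b)=p_3(x+b)-p_3(x)$, is sound, and it is in effect a continuous-variable version of what the paper does after rescaling by $T=1/b$: Lemma \ref{lm-180228-1} works with $X\bmod 1$ and the local density $g_X$ to force the integrality constraint, and then pins down the extreme values of the middle mass by explicit conditional couplings of the form $[Y\mid X\in\cdot]=\mathrm{const}-X$ on half-period blocks. The problem is that your write-up stops exactly where the content of the theorem lies. The necessity direction --- that no coupling exists when $a<1$ and neither $\tfrac1{2b}\in\N$ nor $\tfrac1{2b}-\tfrac12\in\N$, and the sharp lower bounds on $f_2$ in cases (i) and (iii) --- is only announced (``track the accumulated discrepancy''); no argument is given, and whatever argument you supply must handle arbitrary measurable profiles $p_j$, not only the staircase solutions suggested by the recursion, a point you yourself flag as open. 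The sufficiency side is equally unfinished: the ``worked example $a=\tfrac34$, $b=\tfrac12$'' you invoke never appears, and none of the piecewise-constant profiles is actually written down and checked against $\int p_j=f_j$ and $p_1+p_3\le 1$. As it stands, this is a credible plan, not a proof.

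A concrete illustration of why the hand-waving is dangerous is your claim that the inequality in case (iii) is ``exactly'' what your construction needs. You never compute the threshold, and the computation is delicate: after the rescaling $T=1/b$, the recentred top atom is $c=(a+b-1)/b\le 1$, and the minimal middle mass delivered by Lemma \ref{lm-180228-1}(iii)(c) is $b^*/T=cb=a+b-1$, not $a+b-\tfrac12$; for instance, with $a=\tfrac56$, $b=\tfrac13$ one can couple $\U[0,3]$ and $\U[-3,0]$ by setting $Y=s-X$ on six length-$\tfrac12$ blocks so that the sum realizes $(f_1,f_2,f_3)=(\tfrac16,\tfrac16,\tfrac23)$, i.e.\ $f_2=a+b-1$. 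So the sharp constants in (i) and (iii) are precisely what your ``accumulated discrepancy'' step has to produce, and they must be derived (as the paper does through the mod-$1$ argument and the explicit block couplings), not asserted; the same care is needed to check that your claimed threshold is even consistent with the bi-atomic boundary case of Theorem \ref{th-bi} that you propose to use as a guide.
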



The corresponding distributions in Theorem \ref{th-tri} are summarized below. Write $c = a/b +1- 1/(2b)$.
${\rm cx}(x, y)$ stands for the convex set generalized by some vectors $x,y$.
\begin{itemize}
\item [(i)] $(f_1,f_2,f_3)\in {\rm cx}\{(0,1,0), \frac12(1-\lceil \frac1{2b} \rfloor, 2\lceil \frac1{2b} \rfloor, 1- \lceil \frac1{2b} \rfloor)\}$.
\item [(ii)] $(f_1,f_2,f_3)\in {\rm cx}\{(0,c,1-c), \frac12(c, 0, 2-c)\}$.
\item [(iii)] $(f_1,f_2,f_3)\in {\rm cx}\{(0,c,1-c), \frac12(c(1-b), 2bc, 2 - c(1-b))\}$.
\end{itemize}

\subsection{Proofs of the main results} \label{sec:sec3.2}
To prove Theorem \ref{main-th}, we need the following lemma.
To state it, we introduce the notion of special simple unimodal functions. A function $h$ is called a \emph{special simple unimodal (SSU) function on $[a,a+n)$}, $a\in\R$, $n\in\N$,  if it is  unimodal, and $h(x)$ is a constant on  $[a+k-1,a+k)$ for each $k=1,\ldots,n$.

\begin{lemma}\label{lm-171231-1}
Let $F$  be a distribution function with density function $f$ and support $[a,a+n)$, $a\in\R_-$, $n\in\N$. Suppose that $F$ has   mean $0$ and $f$ is a SSU function on  $[a,a+n)$.  Then for $c\ge \max\{a+n,-a\}/2$, we have $F\in \D_2({\rm U}[-c,c],{\rm U}[-c,c])$.
\end{lemma}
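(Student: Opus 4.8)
The plan is to reduce the continuous problem to a purely discrete one via a convolution identity, and then realize $F$ as a convex mixture of distributions already known to lie in $\D_2(\U[-c,c],\U[-c,c])$ by a rescaling of Proposition \ref{pr-firstobs}. First I would record the convolution structure. Since $f$ is SSU on $[a,a+n)$, it is constant on each cell $[a+k-1,a+k)$; writing $p_k$ for its mass there, $F = \sum_{k=1}^n p_k\,\U[a+k-1,a+k)$, and hence
$$
F = D * \U[-\tfrac12,\tfrac12], \qquad D = \sum_{k=1}^n p_k\,\delta_{m_k}, \quad m_k = a+k-\tfrac12,
$$
where $D$ is a discrete distribution on the equally spaced grid $m_1<\dots<m_n$ of spacing $1$, with unimodal weights $(p_k)$ and, since $\mu(F)=0$, mean $0$.

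Next I would isolate the admissible building blocks. Rescaling Proposition \ref{pr-firstobs}(ii)--(iii) through the affine map $x\mapsto 2cx-2c$ shows that any distribution with mean $0$ and support in $[-2c,2c]$ whose density is either monotone or symmetric and unimodal belongs to $\D_2(\U[-c,c],\U[-c,c])$. The point of the convolution identity is that these shapes are produced automatically at the continuous level: if $D_j$ is a discrete law supported on a contiguous block of grid points, then $D_j*\U[-\tfrac12,\tfrac12]$ is a step density on a single interval, which is decreasing or increasing when the weights of $D_j$ are monotone, and symmetric--unimodal when the weights of $D_j$ are symmetric about $0$ and unimodal. Moreover, convolution with the mean-zero $\U[-\tfrac12,\tfrac12]$ preserves the mean, and the support stays inside $[a,a+n)\subseteq[-2c,2c]$, the last inclusion being exactly where the hypothesis $2c\ge\max\{a+n,-a\}$ is used.

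The heart of the argument, and the step I expect to be the main obstacle, is therefore the discrete decomposition: to express $D$ as a finite convex combination $D=\sum_j w_j D_j$ in which each $D_j$ is supported on a contiguous run of grid points and is either symmetric about $0$ with unimodal weights, or of mean $0$ with monotone weights. I would prove this by induction on $n$, peeling off one admissible piece at a time: exploiting the unimodality and the mean-$0$ constraint, one can always strip either a symmetric-about-$0$ unimodal layer around the origin or a monotone end-block, in each case reducing the size of the support (or the total mass) while keeping the remainder unimodal with mean $0$. Care is needed because the grid $\{m_k\}$ need not be symmetric about $0$, so symmetric pieces alone cannot exhaust $D$; the monotone pieces are essential for absorbing this asymmetry, their mean-$0$ requirement forcing their support to overhang toward the longer side, exactly as a decreasing or increasing density with mean $0$ must.

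Finally I would assemble the conclusion. Convolving the decomposition of $D$ with $\U[-\tfrac12,\tfrac12]$ yields $F=\sum_j w_j\,(D_j*\U[-\tfrac12,\tfrac12])$, a finite convex combination of distributions each lying in $\D_2(\U[-c,c],\U[-c,c])$ by the rescaled Proposition \ref{pr-firstobs}. Since $\D_2(\U[-c,c],\U[-c,c])$ is convex by Lemma \ref{lm-180905-1}, the mixture $F$ lies in it as well, which is the claim.
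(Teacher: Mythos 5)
Your high-level architecture is in fact the same as the paper's: the paper also builds the proof from exactly two ingredients---monotone-density building blocks with mean zero (obtained from the joint-mixability theorems of Wang and Wang (2016), which are the source of Proposition \ref{pr-firstobs} (ii)--(iii)) plus convexity of $\D_2$ via Lemma \ref{lm-180905-1}---and it also proceeds by induction, peeling off one admissible piece at a time; your convolution identity $F = D * \U[-\tfrac12,\tfrac12]$, the rescaling of Proposition \ref{pr-firstobs} by $x\mapsto 2cx-2c$, and the support bookkeeping through $2c\ge \max\{a+n,-a\}$ are all correct but are only a cosmetic discretization of what the paper does directly with step densities.

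The genuine gap is the step you yourself flag as the main obstacle: the decomposition of $D$ into contiguous, mean-zero, monotone (or symmetric unimodal) pieces is asserted, not proved, and it is precisely the entire content of the lemma. Note first that the symmetric pieces are generically unavailable: the grid $\{a+k-\tfrac12: k=1,\dots,n\}$ is closed under reflection about $0$ only when $2a\in\mathbb{Z}$, so in general every piece in your decomposition must be of the monotone mean-zero type, and your two-option stripping collapses to a single option. Second, an arbitrary monotone mean-zero ``end-block'' will in general either fail to exist on a proper sub-block (a mean-zero monotone piece must straddle $0$ in a constrained way) or destroy unimodality of the remainder, so the existence of a stripping that keeps the remainder unimodal, mean-zero, and supported on one fewer cell is exactly what must be constructed. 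The paper does this with a very specific choice: assuming without loss of generality $a+n>-a$ (reflect otherwise), it strips $\lambda H$ where $H$ is the two-step \emph{decreasing} density equal to $\alpha=(n+1+2a)/n$ on the first cell $[a,a+1)$ and to $\beta=(-1-2a)/(n(n-1))$ on all the remaining cells---a piece spanning the \emph{whole} support, not an end-block---with $\lambda=\min\{a_1/\alpha,a_2/\beta,\dots,a_n/\beta\}$, where $a_1,\dots,a_n$ are the cell values of $f$; it then verifies, by a two-case analysis on the location of the mode, that $(F-\lambda H)/(1-\lambda)$ is again an SSU density with mean zero whose support loses either the first or the last cell, so the induction closes (the base case $n=1$ being $\U[-\tfrac12,\tfrac12]$). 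Until you exhibit such an explicit piece and prove unimodality of the remainder (and with it the termination of the peeling in at most $n$ steps, rather than merely ``reducing the total mass''), your proposal is a correct reduction of the lemma to an unproved combinatorial claim rather than a proof.
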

\begin{proof}
We show the result by induction. For $n=1$,  then we have $F$ is the uniform distribution on $[a,a+1)$ with mean $0$. This means $a=-0.5$, that is, $F$ is the distribution of ${\rm U}[-0.5,0.5]$, and $c\ge 0.5$. By Theorem 3.1 of \cite{WW16}, we know that  ${\rm U}[-0.5,0.5]$,  ${\rm U}[-c,c]$ and ${\rm U}[-c,c]$ are jointly mixable as the mean inequality $-0.5-2c+2c\le 0 \le 0.5+2c-2c$ is satisfied. 
This means $F\in \D_2({\rm U}[-c,c],{\rm U}[-c,c])$.

Next, we assume the result holds for $n\le k$ and show it holds for $n=k+1$. 
 Without loss of generality, we assume $a+n>-a$. Otherwise consider the distribution of $X^*=-X$ with $X\sim F$.
Define a  distribution $H$ with  density function $h: [a,a+n)\to\R_+$ defined as
$$
 h(x)= \frac{n+1+2a}{n}=:\alpha,~x\in [a,a+1),~~~h(x) = \frac{-1-2a}{n(n-1)}=:\beta,~~x\in[a+1,a+n).
$$
It can be easily verified that it has mean $0$ and  $\alpha\ge 1/n\ge \beta>0$ as $2a+n>0$. That is, $h$ is a decreasing density function on $[a,a+n)$ with mean $0$. Then by  Theorem 3.2 of \cite{WW16}, we have $H$, ${\rm U}[-c,c]$ and ${\rm U}[-c,c])$ are jointly mixable.  Hence, we have $H\in \D_2({\rm U}[-c,c],{\rm U}[-c,c])$.

Denote $a_i = f(a+i)$, $i=0,\ldots, n-1$. Since $f$ is unimodal, without loss of generality, assume that $a_1\le \cdots\le a_k\ge \cdots \ge a_n$ for some $k\in \{0,\ldots, n-1\}$. Let
$$
  \lambda := \min\left\{ \frac{a_1}{\alpha},\frac{a_2}{\beta},\ldots, \frac{a_n}{\beta}\right\}.
$$
 By contradiction, it immediately follows  from  $\sum_{i=1}^na_i=1$ and $\alpha+(n-1)\beta=1$ that  $\lambda \le 1$.   If $\lambda=1$, then $a_1=\alpha$ and $\alpha_i=\beta$, $i=2,\ldots,n$, that is, $F=H\in \D_2({\rm U}[-c,c],{\rm U}[-c,c])$, which shows the statement in the lemma.  Next, we consider the case $\lambda<1$. We first assert that the following sequence
$$
  b_1 =a_1-\lambda \alpha,~ ~~b_i = a_i-\lambda \beta,~~i=2,\ldots,n.
$$
is unimodal such that either $b_1$ or $b_n$ is $0$.  To see it, we only need to show it is unimodal by the definition of $\lambda$. We consider the following two cases.
\begin{itemize}
\item [(i)]  If $k\ge 2$, then $a_1\le a_2$ and hence, $b_1=a_1-\lambda \alpha \le a_2 -\lambda \beta =b_2$ as $\alpha\ge \beta$. Also, note that $b_2\le \cdots\le b_k\ge \cdots \ge b_n$. Hence, the sequence $\{b_i,\,i=1,\ldots,n\}$ is unimodal.
\item [(ii)] If $k=1$, then $a_1\ge a_2\ge \cdots\ge a_n$, and hence, $b_2\ge \cdots\ge b_n$.  No matter $b_1\ge b_2$ or $b_1\le b_2$, we have that $\{b_i,\,i=1,\ldots,n\}$ is unimodal.
\end{itemize}
Define $
   F_0 = \frac{F-\lambda H}{ 1-\lambda}.
$
{It is easy to check that $F_0$ is a distribution function with a density function $f_0$ taking value $b_i/(1-\lambda)$ on the interval $[a,a+i)$, $i=1,\ldots,n$. 
By the above observations on  the  sequence $\{b_i,\,i=1,\ldots,n\}$,}  we know that   $F_0$ is a distribution with mean $0$ and with support on a subset of $[a,a+n-1)$ or $[a+1, a+n)$.  Then by induction, we have $F_0\in  \D_2({\rm U}[-c,c],{\rm U}[-c,c])$. 
Since $F = (1 -\lambda)F_0 + \lambda H$, and    $\D_2$ is a convex set, we have $F\in  \D_2({\rm U}[-c,c],{\rm U}[-c,c])$.
\end{proof}

Now we are ready to prove Theorem \ref{main-th}.

\begin{proof}[Proof of  Theorem \ref{main-th}]
Denote by $f$ the density function of $F$, which is unimodal.  Then there exists  $x_0\in [0,2]$ such that $f(x_0)=\max\{f(x),x\in[0,2]\}$.   Let $X$ and $Y$ be  two independent random variables  such that $X\sim F$ and $Y\sim {\rm U}[0,1]$, and  define $X_m=\lfloor m(X-1)\rfloor +Y$, $m\in\N$.
Then the density function  of $X_m$, denoted by $h_m$, is   given by
 $$h_m(x)=\int_{j/m}^{(j+1)/m} f(x) \d x=:p_j, ~~x\in [j-m,j-m+1),~j=0,\ldots,2m-1.$$    Let $k=\lfloor mx_0\rfloor$. Then we have
$$
   p_0\le p_1\le \cdots\le p_{k-1},~~~p_{k+1}\ge p_{k+2}\ge \cdots\ge p_{2m},
$$
and by $f(x)$ is unimodal on $[k/m,(k+1)/m]$,
$$
   p_k=\int_{k/m}^{(k+1)/m} f(x) \d x \ge  \frac1m\min \left\{ f(k/m),\,f((k+1)/m) \right\} \ge \min\{p_{k-1},\,p_{k+1}\}.
$$
That is, the sequence $\{p_k,\,k=1,\ldots,2m\}$ is unimodal. It can be verified that $$\E[X_m]=\E[\lfloor m(X-1)\rfloor-m(X-1)]+ \frac 12 ,$$  which lies in $[-0.5,0.5]$.  Then the random variable  $Y_m:=X_m-\mu(X_m)$  has mean $0$ and takes value in a subset of
$[-m-0.5,m+0.5]$. By Lemma  \ref{lm-171231-1}, we have the distribution of $Y_m$ belongs to $\D_2({\rm U}[-c,c],{\rm U}[-c,c])$ with {$c=m/2+0.25$}.
Then obviously, we have the distribution of $Y_m/(2m+0.5)+1$ belongs to $\D_2^\U $. Note that
$Y_m/(2m+0.5)+1$ converges to $X$ in $L^\infty$-norm as $m\to\infty$. Hence, by the closure of $\D_2^\U $ with respect to $L^\infty$-norm, we have $F\in\D_2^\U $. This completes the proof.
\end{proof}

To prove Theorems \ref{pr-contines} - \ref{th-tri},  we need the following lemma.

\begin{lemma}\label{lm-180228-1}
{Let  $Z$ be a random variable with distribution $F$   supported in $\{b-k,\, b-k+1,\ldots,b\}$, $k\in\N$, $b\in\R$,
}  satisfying
$$
\p(Z=b-i)=p_i\ge 0,~~i=0,\ldots,k,~~and~~\sum_{i=0}^k p_i=1.
$$ 
{Then we have the following statements hold. }
\begin{itemize}
\item [(i)] If $F\in \D_2({\rm U}[0,T],{\rm U}[-T,0])$, then  at least  one of $b$ and $T$ is an integer.
\item [(ii)] If $k=1$, then $F\in \D_2({\rm U}[0,T],{\rm U}[-T,0])$ if and only if $b\in (0,1)$ and $T\in\N$.
\item [(iii)] If $k=2$, then $F\in \D_2({\rm U}[0,T],{\rm U}[-T,0])$ if and only if  one of the following three cases holds:
   \begin{itemize}
\item [(a)] $b=1$, $p_2=p_0=(1-p_1)/2$, $p_1\ge r/T$ with $T=2m\pm r$ with $m\in\N$ and $r\in (0,1)$.  
\item [(b)]  $b\in (0,1)\cup (1,2)$,  $T$ is even, $p_1\in (0,\min\{b,2-b\})$, $p_2=(b-p_1)/2$ and $p_0=1-(b+p_1)/2$.

\item [(c)]  $b\in (0,1)\cup (1,2)$,  $T$ is odd, $p_1\in (b^*/T,b^*)$ with $b^*=\min\{b,2-b\}$, $p_2=(b-p_1)/2$  and $p_0=1-(b+p_1)/2$.
\end{itemize}
\end{itemize}
\end{lemma}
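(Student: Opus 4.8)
The backbone of all three parts is a reduction to the fractional parts of $X$ and $Y$. Suppose $Z=X+Y$ is realized with $X\sim\U[0,T]$ and $Y\sim\U[-T,0]$, and write $\beta=b-\lfloor b\rfloor$. Since $Z$ takes values in $b-\{0,\dots,k\}$, we have that the fractional part of $X+Y$ equals $\beta$ almost surely, and because the fractional parts $U$ of $X$ and $V$ of $Y$ lie in $[0,1)$ this forces $U+V\in\{\beta,\beta+1\}$, i.e. $V=g(U)$ is \emph{deterministic}, where $g(u)$ denotes the fractional part of $\beta-u$. The map $g$ is a measure-preserving involution reflecting each of $[0,\beta]$ and $(\beta,1)$ onto itself. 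A direct computation gives the law of $U$ (a two-step density, high value $(\lfloor T\rfloor+1)/T$ on $[0,\{T\})$ and $\lfloor T\rfloor/T$ elsewhere) and the required law of $V$ (same two values, jump at $1-\{T\}$). For part (i) I would push the density of $U$ through $g$ and equate it with that of $V$: writing $\tau$ for the fractional part of $T$, the set $\{v:g(v)<\tau\}$ must agree up to null sets with $[1-\tau,1)$. Splitting into $\beta<\tau$, $\beta>\tau$, $\beta=\tau$, each case forces $\beta\in\{0,1\}$ or $\tau\in\{0,1\}$, impossible when both $b,T\notin\N$; hence at least one of $b,T$ is an integer.

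Part (ii) is then immediate: the mean-zero condition forces $b=p_1\in(0,1)$, so $b\notin\N$ and (i) gives $T\in\N$; conversely, for $T\in\N$ I would take the integer parts antimonotone, $\lfloor Y\rfloor=-1-\lfloor X\rfloor$, combined with the fractional coupling $V=g(U)$. Then $X\sim\U[0,T]$, $Y\sim\U[-T,0]$, and $X+Y=-1+(U+V)=b-1+\one\{U>b\}$ realizes $F$. For part (iii) in the generic case $b\notin\N$ (cases (b),(c)), (i) forces $T\in\N$, so $U,V$ are uniform. Writing $X=\lfloor X\rfloor+U$, $Y=\lfloor Y\rfloor+V$ and $s=\one\{U>\beta\}$, uniformity of the margins forces $I:=\lfloor X\rfloor$ and $\tilde L:=\lfloor Y\rfloor+T$ to be uniform on $\{0,\dots,T-1\}$ and the $\mathrm{Ber}(1-\beta)$ variable $s$ to be independent of each of $I,\tilde L$ (though not of the pair). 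Everything reduces to coupling $I,\tilde L$ and $s$ so that $I+\tilde L+s$ occupies three consecutive integers with the prescribed masses. The identities $p_2=(b-p_1)/2$ and $p_0=1-(b+p_1)/2$ are automatic from mean zero; conditioning on $s$, the block $s=1$ is forced antimonotone (its sum has mean $T-1$ and support $\le T-1$), while the block $s=0$ must place a symmetric mass $(\gamma,1-2\gamma,\gamma)$ on $\{T-2,T-1,T\}$ with $\gamma=p_2/b^*$.

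Reflecting the columns turns the $s=0$ block into a nonnegative \emph{tridiagonal} matrix with all row and column sums $1/T$ and equal off-diagonal masses $\gamma$, and the whole problem hinges on the sharp value $\gamma_{\max}=\lfloor T/2\rfloor/T$. The upper bound I would obtain by a parity count: the mass sent from even-indexed rows into even-indexed columns lives on the main diagonal, so it is at most the total diagonal mass $1-2\gamma$; on the other hand it is at least (even-row mass)$-$(odd-column mass)$=(|E|-|O|)/T$, which equals $1/T$ when $T$ is odd. Hence $1-2\gamma\ge1/T$, i.e. $\gamma\le(T-1)/(2T)$, for odd $T$, and nothing is lost for even $T$. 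Achievability comes from pairing consecutive indices into $2$-cycles on the super/sub-diagonals, which uses up all the mass when $T$ is even but necessarily leaves one diagonal cell of mass $1/T$ when $T$ is odd. Translating $\gamma\le\gamma_{\max}$ back through $p_1=b^*(1-2\gamma)$ yields precisely $p_1>0$ for $T$ even and $p_1>b^*/T$ for $T$ odd, matching cases (b) and (c); the reflection $Z\mapsto-Z$ handles $b\in(1,2)$ via $b^*=\min\{b,2-b\}$, and for necessity one checks that \emph{every} coupling must reduce to exactly these antimonotone/tridiagonal canonical forms.

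Finally, for case (a) one has $b=1$, so $\beta=0$, $s\equiv1$ and $V=1-U$; the same decomposition applies but now $T$ need not be an integer, so $U$ is no longer uniform and the canonical matrix acquires a shorter boundary row and column of length $\{T\}$. Here I would run the transportation argument directly on $[0,T]$, assigning to each $x$ a law on $\{-1,0,1\}$ subject to the hard constraint $Y=Z-X\sim\U[-T,0]$; the deficiency created by the non-integer part, measured by the distance $r$ of $T$ to the nearest even integer, produces exactly the threshold $p_1\ge r/T$, while mean zero forces $p_0=p_2$. The main obstacle I anticipate is precisely this extremal analysis: proving the sharp constant $\gamma_{\max}=\lfloor T/2\rfloor/T$ (the source of the even/odd dichotomy) and, harder still, its boundary-corrected analogue in case (a), where the intervals of length $\{T\}$ must be tracked carefully to recover the exact constant $r/T$. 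Maintaining the necessity direction in lockstep with the explicit sufficiency constructions is the other delicate bookkeeping point.
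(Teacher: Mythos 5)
Your reduction to fractional parts is sound, and for most of the lemma it is essentially the paper's argument in cleaner clothing. Part (i) (push the two-step density of $U$ through the involution $g$ and compare with the density of $V$) is the same computation the paper does with its function $g_X$ and the identity $g_X(x)=g_X(x-b)$; part (ii) matches the paper's block-antimonotone construction exactly. Where you genuinely improve on the paper is the necessity direction of (iii)(b),(c): after conditioning on $s=\one\{U>\beta\}$, your mean argument forcing the $s=1$ block to be antimonotone, and especially the parity count for tridiagonal couplings of two discrete uniforms (diagonal mass at least $(|E|-|O|)/T=1/T$ when $T$ is odd, sharp by $2$-cycle pairings) is a rigorous version of what the paper only asserts: its justification of the lower bound $p_1\ge b/T$ for odd $T$, and of $p_1\ge r/T$ in case (a), is the terse claim that ``there is at least one $k$ such that $X+Y=0$ on $\{X\in(k,k+r)\}$,'' which is neither obvious nor exactly the statement needed (the diagonal mass could a priori be spread over many intervals). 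Your even/odd-row counting argument is the right way to make this airtight.

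The genuine gap is case (iii)(a), the only case where $T$ may be non-integer. There you give a plan (``run the transportation argument directly on $[0,T]$\dots the deficiency\dots produces exactly the threshold $p_1\ge r/T$'') and explicitly defer the extremal analysis, so neither the bound $p_1\ge r/T$ nor its attainability is actually proved. Note that your own parity lemma does close it, but one must set it up by conditioning on $u=\{X\}$ rather than by appending a short boundary row to a single matrix: since $b=1$ forces $\{X\}=\{-Y\}$ a.s., the conditional law of the integer parts given $u$ is uniform on $\lceil T\rceil$ points when $u<\{T\}$ and on $\lfloor T\rfloor$ points when $u>\{T\}$, the support constraint makes each conditional coupling tridiagonal, and exactly one of $\lceil T\rceil,\lfloor T\rfloor$ is odd when $T\notin\N$. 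Writing $T=2m\pm r$, the odd-size block occurs on a $u$-set of $X$-probability $r\cdot(\text{odd size})/T$, and your conditional bound (diagonal mass at least one over the block size) integrates to exactly $p_1\ge r/T$; $2$-cycle pairings with one leftover diagonal cell on the odd blocks, mixed with the antimonotone coupling, give all admissible $p_1$. (Working instead with the unconditional matrix with margins $(1/T,\dots,1/T,r/T)$, as your ``shorter boundary row'' suggests, requires an extra realizability argument, because for non-integer $T$ the integer part of $X$ is not independent of $u$.) As written, the proposal establishes (i), (ii) and (iii)(b),(c) but leaves (iii)(a) unproven.
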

\begin{proof}
First we introduce the notation:
 for any random variable $X$ and any set $L\subset \R$, define   random events
$$
  A_X(L) :=\{X-n\in L~{\rm for~some}~n\in\N\}=\{X\mod 1\in L\},
$$
and a function
$$
g_X(x) = \lim_{\delta\downarrow 0} \frac{\p(A_X([x-\delta,x+\delta]))}{2\delta}
,~~x\in\R,~~\mbox{if the limit exists}.
$$
Note that $Z \equiv b \mod 1$, $\E[Z]=0$ and hence, $b-k\le 0\le b$.  If $F\in \D_2({\rm U}[0,T],{\rm U}[-T,0])$, then there exist two random variables   $X\sim {\rm U}[0,T]$ and $Y\sim {\rm U}[-T,0]$ such that $Z=X+Y$ a.s.
Since $Z=X+Y \equiv b \mod 1$ a.s., we have $A_X(L)=A_Y(b-L)$ a.s. for any $L\subset \R$, where $b-L=\{b-x: x\in L\}$. 
We first show that at least  one of $b$ and $T$ is an integer. To this end, assume that $T$ is not an integer. Then there exists $\ell \in \N$ such that $T=\ell+t$ with $t\in(0,1)$. Note that $X\sim {\rm U}[0,T]$. We have
\begin{align*}
  g_X(x) & =\lim_{\delta\downarrow 0} \frac{\p(X\mod 1\in [x-\delta,x+\delta])}{2\delta}\\
  & = \frac1{2T} \left(\#\{n\in\N: n+x\in (0,T)\} +\#\{n\in\N: n+x\in [0,T]\} \right)\\
  & = \begin{cases}
  \frac{\ell+1}{T},& {\rm if}~x\mod 1\in (0,t),\\
   \frac{\ell+1/2}{T},&  {\rm if}~x\mod 1= 0~{\rm or}~t,\\
  \frac \ell {T}, &  {\rm if}~x\mod 1\in (t,1).
  \end{cases}
\end{align*}
 {By the definition of $A_X(L)$ and $Y\stackrel{\rm d}= -X$, we have  $A_Y([x-\delta,x+\delta]) =A_{-X}([x-\delta,x+\delta])=  A_{X}([-x-\delta,-x+\delta])$ a.s., and thus, $g_Y(x)=g_X(-x)$ for $x\in\R$. Also, note that $A_X(L)=A_Y(b-L)$ a.s. which implies  $g_X(x)=g_Y(b-x)$. } Therefore, we have $$g_X(x)=g_Y(b-x)=g_X(x-b)~~{\rm for~any~}x\in\R.$$ 
Note that there exists $x\in\R$ such that $x\mod 1\in (0,t)$ and $x-b\mod 1\in (t,1)$ which contradicts with the formula of $g_X$. Hence, $b$ must be an integer.

Next we consider the two cases that $k=1$ and $k=2$.
\begin{itemize}
\item [(i)] For $k=1$, note that for any $b\in\N$, $\mu(F)$ could not be $0$. Hence, we only need to show $F\in \D_2({\rm U}[0,T],{\rm U}[-T,0])$ when $b\in (0,1)$ and $T\in \N$.
     By $\mu(F)=0$, we have $p_1=b$ and $p_0=1-b$.
Let  $X\sim {\rm U}[0,T]$  and  define random variable $Y$ such that 
$$
[Y|X\in[k,k+b]] = b-1-X,~ {\rm a.s.~~for} ~ k=0,\ldots,T-1
$$
and
$$
[Y|X\in(k+b,k+1]] = b-X,~ {\rm a.s.~~for} ~ k=0,\ldots,T-1.
$$
Then it is easy to see that $Y\sim {\rm U}[-T,0]$ and $X+Y$ has the distribution $F$.
 %
 Thus, we have $F\in\mathcal D_2({\rm U}[0,T], {\rm U}[-T,0])$.

\item [(ii)] If $k=2$, by the necessity condition that at least  one of $b$ and $T$ is an integer, we consider the following three cases. Without loss of generality, assume $p_i=\p(Z=b-i)>0$, $i=0,1,2.$
\begin{itemize}
\item [(a)] If $b$ is an integer,  for the mean-constraint to be satisfied, we have $b=1$  and
    $$
    \p(Z=-1) =\p(Z=1)=p_0>0~~{\rm and}~~\p(Z=0)=p_1 =1-2p_0.
    $$
    Let $T= 2m\pm r$ with $r\in [0,1]$ and $m\in\N$.
    We only need to find the smallest value of $p_1$ such that $F\in \D_2({\rm U}[0,T],{\rm U}[-T,0])$ as $\delta_0\in \D_2({\rm U}[0,T],{\rm U}[-T,0])$ and $\D_2({\rm U}[0,T],{\rm U}[-T,0])$ is closed under mixture, where $\delta_0$ is the point-mass at  $0$. Assume that there exist $X\sim {\rm U}[0,T]$  and  $Y\sim {\rm U}[-T,0]$ such that $Z=X+Y$ a.s.
    If $T=2m+r$, then since $b$ is an integer, we have 
\begin{align*}
      A_X((r,1)) &= \big\{X\in \cup_{k=1}^{2m} (r+k-1,k)\big\}\\ 
      &= A_Y(\{(-1,-r)\})= \big\{Y\in \cup_{k=1}^{2m} (-k, -r-k+1)\big\}~~{\rm a.s.} 
\end{align*}
We let
\begin{align*}
     [Y|X\in A_1] = -X-1~~{\rm and}~~[Y|X\in A_2] = -X+1~~{\rm a.s.}
\end{align*}
    where $A_1=\cup_{k=1}^{m} (r+2k-3,2k-1)$ and $A_2= \cup_{k=1}^{m} (r+2k-1,2k)$. Then $[X+Y|X\in A_1\cup A_2]=[X+Y|A_X((r,1))]$  takes values on $\{-1,1\}$.
    On the other hand, note that 
\begin{align*}
      A_X((0,r)) &= \big\{X\in \cup_{k=0}^{2m} (k,k+r)\big\}
      = A_Y((-r,0)) = \big\{Y\in \cup_{k=0}^{2m} (-k-r,-k)\big\}. 
\end{align*}
    Then on the set $A_X((0,r))$, 
    $X+Y$ could not only take values on $\{-1,1\}$. There is at least one $k\in\{0,\ldots,2m\}$  such that $X+Y=0$ on $\{X\in (k,k+r)\}$. Hence, $p_1\ge (2m+1)r/(2m+1)T=r/T$. We next show that $r/T$ can be attained by $p_1$ for $F\in \D_2({\rm U}[0,T],{\rm U}[-T,0])$. It suffices to let $[Y|X\in (2m,2m+r)]=-X$ a.s. and for $k=0,\ldots,m-1$
     $$
      [Y|X\in(2k,2k+r)] =-X-1,~~[Y|X\in(2k+1,2k+1+r)] =-X+1~{\rm a.s.}
     $$
  In this case, we have $p_1=r/T.$
%
By symmetry, we can also get the same result if $T=2m-r$. {To see this, note that almost surely
\begin{align*}
      A_X((0,1-r)) &= \big\{X\in \cup_{k=0}^{2m-1} (k,k+1-r)\big\}\\
      &= A_Y((r-1,0)) = \big\{Y\in \cup_{k=0}^{2m-1} (-k-1+r,-k)\big\}. 
\end{align*}
We let
\begin{align*}
     [Y|X\in A_1] = -X-1~~{\rm and}~~[Y|X\in A_2] = -X+1~~{\rm a.s.}
\end{align*}
    where $A_1=\cup_{k=0}^{m-1}  (2k,2k+1-r) $ and $A_2= \cup_{k=0}^{m-1} (2k+1,2k+2-r)$. Then $[X+Y|X\in A_1\cup A_2]=[X+Y|A_X((0,1-r))]$  takes values on $\{-1,1\}$.
 On the other hand, note that almost surely
 \begin{align*}
      A_X((1-r,1)) &= \big\{X\in \cup_{k=1}^{2m-1} (k-r,k)\big\}\\ 
      &= A_Y(\{(-1,r-1)\})= \big\{Y\in \cup_{k=1}^{2m-1} (-k, r-k)\big\}. 
\end{align*}
    Then on the set $A_X((1-r,1))$, 
    $X+Y$ could not only take values on $\{-1,1\}$. There is at least one $k\in\{1,\ldots,2m-1\}$  such that $X+Y=0$ on $\{X\in (k-r,k)\}$. Hence, $p_1\ge (2m-1)r/(2m-1)T=r/T$. We next show that $r/T$ can be attained by $p_1$ for $F\in \D_2({\rm U}[0,T],{\rm U}[-T,0])$. It suffices to let $[Y|X\in (2m-1-r,2m-1)]=-X$ a.s. and for $k=1,\ldots,m-1$
     $$
      [Y|X\in(2k-r,2k)] =-X-1,~~[Y|X\in(2k+1-r,2k+1)] =-X+1~{\rm a.s.}
     $$
  In this case, we have $p_1=r/T.$
}
 Hence, we have when $b$ is an integer,  $Z\in \D_2({\rm U}[0,T],{\rm U}[-T,0])$  if and only if $b=1$ and $\p(Z=-1)=\p(Z=1)\le 1/2-r/(2T)$.

 \item [(b)] If $b$ is not an integer and $T$ is a even integer, without loss of generality, assume $b\in(0,1)$ as the case $b\in (1,2)$ can be discussed similarly by considering  the symmetric distribution on $\{-b,1-b,2-b\}$ and noting that $\D_2({\rm U}[0,T],{\rm U}[-T,0])$ is closed under symmetric transform.  To make sure $\E[Z]=0$, we have $p_0-p_2=1-b$.
     By Part (i), we know the distribution, denoted by $F_1$, on $\{b-1,b\}$ with mean $0$ belongs to $\D_2({\rm U}[0,T],{\rm U}[-T,0])$ which is also closed under mixture. Hence, we have $F\in \D_2({\rm U}[0,T],{\rm U}[-T,0])$ implies
     \begin{align*}
      \lambda F + (1-\lambda) F_0 & =\lambda p_2 \delta_{b-2} + (\lambda p_1 + (1-\lambda) b)\delta_{b-1} +  (\lambda p_0 + (1-\lambda) (1-b) )\delta_{b} \\
      & =\lambda p_2 \delta_{b-2} + (\lambda p_1 + (1-\lambda) b)\delta_{b-1} +  (\lambda p_2 + 1-b )\delta_{b} \\
      & \in \D_2({\rm U}[0,T],{\rm U}[-T,0])~~~~{\rm for~any}~\lambda\in [0,1],
     \end{align*}
   where the second equality follows from $p_0-p_2=1-b$. Therefore, we only need to find the smallest  and the largest values of $p_1$ such that $p_0+p_1+p_2=1$, $p_0-p_2=1-b$ and $F\in  \D_2({\rm U}[0,T],{\rm U}[-T,0])$.

     Note that $F\in  \D_2({\rm U}[0,T],{\rm U}[-T,0])$ is equivalent to there exist $X\sim {\rm U}[0,T] $ and $Y\sim{\rm U}[-T,0]$ such that $Z=X+Y$ a.s. Then by $A_X(L)=A_Y(b-L)$ a.s. for any $L\subset \R$, we have 
     \begin{align*}
       A_X((b,1))
       & = \big\{X\in \cup_{k=1}^{T} (b+k-1,k)\big\}\\
       & = A_Y(\{(b-1,0)\})= \big\{Y\in  \cup_{k=1}^T (b-k,1-k) \big\}.
     \end{align*}
     It is easy to verify that 
      $\E[X+Y| A_X((b,1)) ]=b$. Note that the $X+Y$ takes values on $\{b-2,b-1,b\}$. Then $[X+Y|A_X((b,1))]=b$ a.s. This implies on $A_X((b,1))$, $X+Y$ is not equal to $b-1$ a.s., which in turn implies $b$ is an upper bound  of $p_1$.
      On the other hand, note that 
     \begin{align*}
       A_X((0,b)) &= \big\{X\in  \cup_{k=1}^T (k-1,b+k-1) \big\}\\ &= A_Y(\{(0,b)\})= \big\{Y\in \cup_{k=1}^T (-k,b-k) \big\}.
     \end{align*}
     Letting
     $[Y| A_X((0,b))] = b-X-1$ a.s. and $[Y| A_X((b,1))] = b-X$ a.s.
    yield that $b$ is the largest value of $p_1$.

     To find the smallest  value of $p_1$, we consider two cases that $T$ is even and odd. If  $T$ is even, then $T=2m$ for some $m\in\N$.  On the set $A_X(0,b)$, for $k=0,1,\ldots,m-1$, we let
     $$
      [Y|X\in (2k,b+2k)] = b-X-2
     ~~{\rm and}~~
      [Y|X\in (2k+1,b+2k+1)] = b-X,~{\rm a.s.}
     $$
    In this case, $p_1$ is zero  which is the   smallest possible value of $p_1$. 

    \item [(c)] If $b$ is not an integer and $T$ is an odd integer, then similar to Part (b), we only need to find the largest and the smallest value of $p_1$ for the case $b\in (0,1)$.  The largest value of $p_1$ is $b$.  To find the smallest value, note that $T=2m+1$ for some $m\in \N$.  On the set $A_X(0,b)$, we let $[Y|X\in (2m,b+2m)] = b-X$ a.s. and  for  $k=0,1,\ldots,m-1$,
     $$
      [Y|X\in (2k,b+2k)] = b-X-2,~~~
      [Y|X\in (2k+1,b+2k+1)] = b-X,~{\rm a.s.}
     $$
In this case,  $p_1=b/T$  which is the smallest possible value of $p_1$. This is due to $ [X+Y|X\in (2m,b+2m)] >-1>b-2$ as $\{X\in (2m,b+2m)\}\subset A_Y(0,b)$ and $[Y|A_Y((0,b))]\ge -2m-1$ a.s.
\end{itemize}
Combining the above three cases, we complete the proof for the case of $k=2$. \qedhere
\end{itemize}
\end{proof}

\begin{proof} [Proof of Theorem \ref{pr-contines}]
Note that $3b/4h \times 2h =3b/2 \le1$, hence $b\le 2/3$. Therefore, $[a,a+b]\subset [0,2]$.
Denote  $\lambda=3b/2\ge 3h$ and  let $H$ denote the distribution function of ${\rm U}[1-h,1+h]$.  Define $G = (F - \lambda H)/(1-\lambda)$. By the assumption on $F$,   $G$ is also a distribution with positive density and mean $1$.  There exists a sequence of distribution functions $\{G_n\}_{n\ge 2}$ with finite support on $[a,a+b]$  and mean $1$ which converges to $G$ in distribution as $n\to\infty$.
Define $F_n = \lambda H+(1-\lambda)G_n$, $n\ge 2$.  Then $F_n$  converges to $F$ is distribution as $n\to\infty$. Note that ${ \D}_2^{\rm U} $ is closed with respect to weak convergence by Lemma \ref{lm-180905-1}. We only need to  show $F_n\in { \D}_2^{\rm U} $ for $n\ge 2$. Without loss of generality, assume $$
 G_n(\{x_i\}) = p_i,~~i=1,\ldots,n~{\rm with}~~p_1+\cdots+p_n=1,~a\le x_1<\cdots<x_n\le a+b.
$$
We show it by induction on $n$.
Note that $a\le 1-h\le 1+h\le b$. For $n=2$, without loss of generality,  let $c,d>0$ be such that $x_1=1-c$ and $x_2=1+d>0$, $3(c+d)/2\le\lambda$ and then
\begin{equation}\label{eq-181012-1}
 G_2(\{1-c\}) = \frac d{c+d}~~{\rm and}~~G_2(\{1+d\}) = \frac c{c+d}.
\end{equation}
Since $ 1 \ge  \lambda \ge c + d + h $, we have  $$  \frac{ 1- h }{c+d}  - \frac{1 - \lambda } {c+d} \ge 1 ~~{\rm and}~~   \frac{ 1- h }{c+d} \ge 1.$$
Then there exists  some integer $k \ge 1 $ such that   $$  \frac{ 1- h }{c+d}  \ge k \ge  \frac{1 - \lambda } {c+d}    .$$
  That is,
$ 1-  k (c+d)   \ge  h  ~ \text{and} ~ k(c+d)  \ge 1-\lambda .$
Denote  $$ \theta :=   \frac {  k(c+d) +  \lambda -1  }{ k(c+d) } \in [0, \lambda]~~ {\rm as}~~\lambda - \theta =  \frac {  (1-\lambda)(1-k(c+d)) }{ k(c+d) } \ge 0 . $$
Note that  the length of the support of $ \U [0,k(c+d) ] $ ($\U[ 1-k(c+d), 1 ] $)  is a multiple of $(1+d)-(1-c) $. By Lemma \ref{lm-180228-1} (i), we have  $G_2\in \mathcal D_2 ( \U [0,k(c+d) ] , \U[ 1-k(c+d), 1 ] ) $.
Similarly, we have $ H \in \mathcal D_2 ( \U[k(c+d) ,1], \U[ 0 , 1-k(c+d)]) $.
 Also, 
 by Theorem \ref{main-th}, we know $H \in \mathcal D_2^{\rm U} $.  It follows that $$ F_2=( 1- \lambda ) G_2 +  ( \lambda - \theta  )  H +  \theta   H \in \mathcal D_2^{\rm U}.$$
Suppose that $F_n=( 1- \lambda ) G_n +  \lambda H \in \mathcal D_2^{\rm U}$ for $n\le k$ and we aim to show $F_{k+1}=(1- \lambda) G_{k+1} +  \lambda H \in \mathcal D_2^{\rm U} $.
Let $G_{k+1,1}$ be defined by \eqref{eq-181012-1} with $c=1-x_1>0$ and $d=x_{k+1}-1>0$. That is,
$$
 G_{k+1,1}(\{x_1\}) = \frac {x_{k+1}-1}{x_{k+1}-x_1}~~{\rm and}~~G_{k+1,1}(\{x_{k+1}\}) = \frac {1-x_1}{x_{k+1}-x_1}.
$$
Let $\alpha := \min\{p_1(c+d)/d,p_n(c+d)/c\}$. Then  $G_{2:k+1} :=  (G_{k+1} - \alpha G_{k+1,1})/(1-\alpha)$ is a distribution function  with support on $\{x_1,\ldots,x_{k}\}$ or $\{x_2,\ldots,x_{k+1}\}$ and mean $1$. Then we have
\begin{align*}
 F_{k+1} & =  ( 1- \lambda ) (\alpha G_{k+1,1} + (1-\alpha) G_{2:k+1}) +  \lambda H \\
 & = \alpha (( 1- \lambda ) G_{k+1,1} +  \lambda H)+(1-\alpha)(( 1- \lambda ) G_{2:k+1}+  \lambda H)).
\end{align*}
By induction, we have $( 1- \lambda ) G_{k+1,1} +  \lambda H \in \mathcal D_2^{\rm U}$ and $( 1- \lambda ) G_{2:k+1}+  \lambda H\in \mathcal D_2^{\rm U}$. Then by the convexity of $\mathcal D_2^{\rm U}$ from Lemma \ref{lm-180905-1}, we have $F_{k+1} \in \mathcal D_2^{\rm U}$. Thus, we complete the proof.
\end{proof}

%
%
%
}


\begin{proof}[Proof of Theorems  \ref{th-bi} and \ref{th-tri}]
{We only give the proof of Theorem \ref{th-tri} as Theorem \ref{th-bi} can be proved similarly based on Lemma \ref{lm-180228-1} (ii).} Note that the cumulative distribution function of a random variable $Z$ belongs to $\mathcal D_2^{\mathrm U}$ if and only if the cumulative distribution function of $T(Z-1)$ belongs to $\mathcal D_2({\mathrm U}[0,T], {\mathrm U}[-T,0])$ {for $T\ge 0$.     
Let  $Z$ be a random variable with distribution $F$ and let $T:=1/b$ and $c := (a+b-1)/b$, that is, $b=1/T$ and $a=c/T+1-1/T$. Then $Z_T:=T(Z-1)$ is a random variable  satisfying 
$$
\p(Z_T=c-i)=f_{i+1},~~i=0,1,2. 
$$ 
By Lemma \ref{lm-180228-1}  (iii), we know  the cumulative distribution function of  $Z_T$ belongs to $\mathcal D_2^{\mathrm U}$ if and only if one of the three cases of Lemma \ref{lm-180228-1}  (iii)  holds  by replacing $b$ by $c$ and $T$ by $1/b$ respectively. That is,
   \begin{itemize}
\item [(a)] $a+b-1/b=1$, $f_2\ge rb$ with $1/b=2m\pm r$ with $m\in\N$ and $r\in (-1,1)$.  
\item [(b)]  $(a+b-1)/b<1$,  $1/b$ is even, $f_2\in (0, (a+b-1)/b)$, $f_3=(c-f_2)/2$ and $f_1=1-((a+b-1)/b+f_2)/2$.
\item [(c)]  $(a+b-1)/b<1$,  $1/b$ is odd, $f_2\in (a+b-1,(a+b-1)/b)$, $f_3=(c-f_2)/2$  and $f_1=1-(c+f_2)/2$.
\end{itemize}
Note that  under the constraint $0<b\le a\le 1$, we have $c\le 1$;  $c=1$ is equivalent to $a=1$;  $rb=\lceil 1/(2b) \rfloor$; $1/b$ is even  if and only if  $1/({2b})\in \N$; $1/b$ is odd  if and only if $1/({2b}) -1 /2 \in \N$. Also, by $\E[Z]=1$, we have $f_3=f_1+(1-a)/b$ and thus, $f_2=1-2f_1-(1-a)/b$ which implies that  $f_2\in (0, (a+b-1)/b)$ always holds. 
 Hence, the statement in Theorem \ref{th-tri} holds.
}
\end{proof}

\section{Sums of three or more standard uniform random variables}\label{sec:4}
In this section, we aim to show that for $n \ge 3$, the two sets $\mathcal D_n^{\rm U}$ and $\mathcal C^{\rm U}_n$ are identical, in sharp contrast to the case of $n=2$ analyzed in Section \ref{sec:3}.
We start with the  bi-atomic distribution. Let
$F$ be the distribution function of a random variable $X$ such that
$\p(X=a)=p$ and $\p(X=b)=1-p$ with $\E[X]=1/2$, $a<b$ and $0<p<1$ and $T_{n} (F)$ be the   distribution function of $nX$. That is,
$$
 F = p \delta _{a} + (1- p) \delta _{ b }~~{\rm and}~~T_{n} (F) = p\delta _{na} + (1- p) \delta _{n b},
$$
 where $\delta_x$ denotes the point-mass at   $x\in \R$.


\begin{lemma}\label{lm-eqbi}
Let   $F $ be a bi-atomic distribution on $\{a,b\}$ with $a<b$. Then the following statements are equivalent.
 \begin{enumerate}[(i)]
 \item $F\lcx \U[0,1]$.
 \item $b-a\le 1/2$ and $\mu(F)=1/2$. 
 \item
 $T_{n} (F) \in \mathcal D_n^{\U}(1,\dots,1)$ for $n\ge 3$.
 \end{enumerate}
\end{lemma}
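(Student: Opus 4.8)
The plan is to prove the cycle (iii) $\Rightarrow$ (i) $\Rightarrow$ (ii) $\Rightarrow$ (iii), where the first two implications are elementary and the last is the substantive one.

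For (i) $\Leftrightarrow$ (ii): write $F = p\delta_a + (1-p)\delta_b$. Since convex order forces equal means, $F\lcx \U[0,1]$ already requires $\mu(F)=1/2$, so I would use the stop-loss characterization \eqref{eq:eqcx}, comparing $\psi_F(k):=\E[(X-k)_+]=p(a-k)_++(1-p)(b-k)_+$ with $\psi(k):=\E[(U-k)_+]$ for $U\sim\U[0,1]$, which equals $(1-k)^2/2$ on $[0,1]$. Testing $k$ just below $b$ (resp.\ just above $a$) forces $\mathrm{Supp}(F)\subset[0,1]$, since otherwise $\psi_F>\psi$ there. On $[a,b]$ the difference $\psi-\psi_F=(1-k)^2/2-(1-p)(b-k)$ is convex, so its minimum over $[a,b]$ is attained either at an endpoint, where the difference simplifies to a perfect square (e.g.\ $\tfrac12(u-\tfrac12)^2$ with $u=(1-p)(b-a)$ at $k=a$) and is automatically nonnegative, or at the interior critical point $k=p$, where it equals $(1-p)\big((1+p)/2-b\big)$. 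Using $\mu(F)=1/2$, i.e.\ $b=1/2+p(b-a)$, this last quantity equals $(1-p)\,p\,\big(1/2-(b-a)\big)$, nonnegative exactly when $b-a\le 1/2$. Hence $F\lcx\U[0,1]$ iff $\mu(F)=1/2$ and $b-a\le 1/2$; these two conditions in turn already force $\mathrm{Supp}(F)\subset[0,1]$.

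For (iii) $\Rightarrow$ (i): by Lemma \ref{lm-cxset-1}, $T_n(F)\in\D_n^{\U}$ implies $T_n(F)\lcx \U[0,1]\oplus\cdots\oplus\U[0,1]=\U[0,n]$. Since convex order is preserved under scaling by $1/n$ and $T_n(F)$ is the law of $nX$ with $X\sim F$, this is equivalent to $F\lcx\U[0,1]$, which is (i).

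For (ii) $\Rightarrow$ (iii), the heart of the matter, I would realize $T_n(F)$ by a complete-mixability decomposition. The aim is to split $\U[0,1]=pG+(1-p)H$, where $G$ has mean $a$, $H$ has mean $b$, each has a monotone density on its support, and, crucially, each is $n$-completely mixable. Granting such $G,H$, take a regime variable $R$ with $\p(R=a)=p$; conditionally on $\{R=a\}$ let $(U_1,\dots,U_n)$ be an $n$-CM coupling of $G$ (so $\sum_i U_i=na$ a.s.), and conditionally on $\{R=b\}$ an $n$-CM coupling of $H$ (so $\sum_i U_i=nb$ a.s.). Each $U_i$ then has law $pG+(1-p)H=\U[0,1]$, while $\sum_{i=1}^n U_i$ equals $na$ with probability $p$ and $nb$ with probability $1-p$, i.e.\ $T_n(F)\in\D_n^{\U}$. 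Concretely I would take $G$ with a decreasing density $g$ bounded above by $1/p$, so that $h=(1-pg)/(1-p)\ge 0$ is automatically an increasing density for $H$, and impose $\int x\,g(x)\d x=a$ (which forces $\mu(H)=b$ by the mean constraint). By the characterization of complete mixability for monotone densities in \cite{WW16}, $G$ (resp.\ $H$) is $n$-CM iff its mean lies at distance at least $(\beta-\alpha)/n$ from either endpoint of its support $[\alpha,\beta]$.

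The main obstacle is precisely the existence of this decomposition: one must choose the support of $g$ and its plateau $\{g=1/p\}$ near $0$ (which governs the left endpoint of $\mathrm{Supp}(H)$) and the right end of $\mathrm{Supp}(G)$ so that both mean conditions hold at once. This is where both hypotheses enter — $b-a\le 1/2$ bounds how far apart the target means $a<1/2<b$ can lie, while $n\ge 3$ supplies the slack $(\beta-\alpha)/n\le(\beta-\alpha)/3$ in the mean condition that makes the two constraints simultaneously solvable; for $n=2$ the mean condition collapses to the single point $\mu=(\alpha+\beta)/2$ and the construction breaks down, consistent with $\D_2^{\U}\subsetneq\C_2^{\U}$. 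I expect the delicate step to be reducing this feasibility to the scalar inequalities controlling the plateau length and the support endpoints and verifying they are solvable under $b-a\le 1/2$ and $n\ge 3$.
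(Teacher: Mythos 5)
Your equivalence (i) $\Leftrightarrow$ (ii) and the reduction (iii) $\Rightarrow$ (i) via Lemma \ref{lm-cxset-1} and scaling are fine and match the paper. For (ii) $\Rightarrow$ (iii) you have identified exactly the right strategy -- write $\U[0,1]=pG+(1-p)H$ with $\mu(G)=a$, $\mu(H)=b$, both components having monotone densities and satisfying the mean condition of Theorem 3.2 of \cite{WW16} so that each is $n$-CM, then couple through a regime variable -- and this is precisely the paper's strategy. But you stop at the point where the real work begins: you state yourself that the existence of such a decomposition is ``the main obstacle'' and only express the expectation that the scalar feasibility inequalities can be verified under $b-a\le 1/2$ and $n\ge 3$. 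That existence claim is the substantive content of the implication, and as written your proposal does not establish it; in particular it is not clear that your one-parameter family (a decreasing density $g\le 1/p$ with a plateau at $0$) can simultaneously meet both mean constraints and both support/mean-condition constraints, and no argument is offered.

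The paper closes this gap with a two-step device you may find instructive. First it builds, by an explicit case analysis ($a\ge 1/n$; $1-b\ge 1/n\ge a$; and $1-b<1/n$, which together with $a\le 1-b$ and $b-a\le 1/2$ forces $n=3$), piecewise-uniform distributions $H_1^*,H_2^*$ with $\U[0,1]=pH_1^*+(1-p)H_2^*$, $H_1^*$ having a decreasing density supported in $[0,na]$ with $\mu(H_1^*)\ge a$ and $H_2^*$ an increasing density supported in $[1-n(1-b),1]$ with $\mu(H_2^*)\le b$; the inequality $b-a\le 1/2$ enters through $p<na$ and through the sign computation in the third case. Second, it takes the trivial split of $\U[0,1]$ at the point $p$ into two uniforms, whose means err in the opposite directions ($\le a$ and $\ge b$), and interpolates between the two decompositions to hit the means $a$ and $b$ exactly; the interpolants keep monotone densities on $[0,na]$ and $[1-n(1-b),1]$, so the mean condition holds with equality and $n$-complete mixability follows. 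This interpolation trick is what lets one avoid solving your feasibility system exactly within a single parametric family, and some analogue of it (or an equally explicit construction) is what your proposal is missing.
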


\begin{proof} It is easy to verify  (iii) $\Rightarrow$ (i). It suffices to show (i) $\Rightarrow$ (ii) $\Rightarrow$ (iii).  Let $X$ be a random variable having distribution $F$ {and  $\p(X=a)=p=1-\p(X=b)$.  Note that under the constraint of (i), we must have $a,b\in (0,1)$ and $\mu(F)=1/2$ which implies $0<a<1/2<b<1$; under the constraint of (ii), by $\mu(F)=1/2$, we have $a<1/2<b$. Then by  $b-a\le 1/2$,  we also have $a>0$ and $b<1$. Thus, in the following proof, we always assume $0<a<1/2<b<1$.

Without loss of generality,  we also assume $ 0 < a \le 1-b \le 1/2$  by symmetry. Otherwise, consider another random variable $X^*=1-X$ with distribution $F^*$ and it suffices to note that  $\mu(F^*)=1/2$, $b^*-a^*=b-a$., and $F\lcx \U[0,1]$ is equivalent to $F^*\lcx \U[0,1]$. }

(i) $\Rightarrow$ (ii):   By $F\lcx \U[0,1]$, we have   $\E[X]=1/2$ and 
    $
     \E[(X-t)_+] \le {(1-t)^2}/{2},
    $
   for $t\in [0,1]$, that is,
   \begin{equation}\label{eq-cxcon}
       (b-t)(1-p)\le  \frac{(1-t)^2}{2},~~a\le t\le b.
    \end{equation}
   To show (ii), {we only need to show $b-a\le 1/2$, that is,  the largest possible value of $b-a$ is $1/2$. To do this, we fix the value of $p$ and denote $\ell_1(t):= (b-t)(1-p)$, $t\in [a,b]$ and $\ell_2(t):=  {(1-t)^2}/{2}$, $t\in [0,1]$.   Note that $\ell_1(t)$ is a linear function with fixed and constant derivative  equaling to $p-1$  and lies below the quadratic curve $\ell_2(t)$ for $t\in [a,b]$. Also note that as $a$ decreases or $b$ increases, $\ell_1$ moves upwards. Hence, when $b-a$ attains its largest possible value, we have the line $\ell_1$ and the quadratic curve $\ell_2$ are tangent and the tangent point satisfies  the equation $\ell_2'(t)=t-1=p-1$, that is, the tangent point is $t=p$. Then by $\ell_1(p)=\ell_2(p)$, we get  $b=1+p/2$ which is  the largest possible value  of $b$. Then by $\E[X]=1/2$, we can get $a=p/2$, which is the smallest possible value of $a$. Therefore, we have the largest possible value of $b-a$ is $1/2$.
}    

(ii) $\Rightarrow$ (iii): Let $G_{x,y}$ be the distribution of $ \U[x,y]$. It suffices to show that $G_{0,1}= (1-p) H_1 + p H_2$ such that both $H_1$ and $H_2$ are $n$-CM, $\mu(H_1)=a$ and $\mu(H_2)=b$.  
We first define two distributions $H_1^*$ and $H_2^*$ for the following three cases with $1-b\ge a$ in mind.
\begin{itemize}
\item [(a)] If $a \ge 1/n$, define $H_1^* = H_2^* =  G_{0,1}$, that is, the distribution function of $\U[0,1]$.

\item [(b)] If $1- b \ge 1/n \ge a $, define
$$ H_1^* = G_{0,na}~~~{\rm and}~~~ H_2^* =  \frac{na -p}{1-p} G_{0,na}+ \frac{1-na}{1-p} G_{na,1}.$$
By $\E[X]=1/2$, we have $$ p = \frac{b-1/2 }{b-a}  = 1- \frac{1/2-a}{b-a} \le 1- 2(\frac12-a) = 2a < na, $$
where the inequality follows from $b-a\le 1/2$. Hence, $H_2^*$ is a distribution with positive density function. It is obvious that $ \mu(H_1^*) = {na}/{2} \ge a $ and thus $\mu (H_2^*) \le b$ as $p\, \mu (H_1^*) + (1-p) \mu(H_2^*) = 1/2$ and $pa+(1-p)b=1/2$.

\item [(c)] If $1-b< 1/n$, then we have $n=3$ as $b-a\le 1/2$ and $a\le 1-b$.  Define
$$ H_1^* = \frac{3b-2}{p } G_{0,3b-2} + \frac{p   +2-3b}{p } G_{3b-2, 3a}~~{\rm and}~~H_2^* =  \frac{ 3a -p }{1-p} G_{3b-2, 3a} +  \frac{1-3a}{1-p} G_{3a,1}.$$
 Note that $ p = \frac{b-1/2 }{b-a}  \ge 2(b-1/2)   >  3b-2 $ and $p < 3a$. Hence, we have both $H_1^*$ and $H_2^*$ are distribution functions with positive densities.
  It is easy to calculate that
$$ \mu(H_1^*)
= \frac{3a + 3b}{2} -1 +\frac{3a }{p} -\frac{ 9 ab   }{2p}. $$
Note that
 \begin{align*}
 2p(\mu(H_1^*) - a ) & = p(3b + a - 2)  + 3a(2 - 3b)\\
 & \ge (3b- 2) (3b + a - 2) - 2(b-a)3a(3b-2)\\
 & = (3b-2)( 3b+a -2 +2(b-a)3a )\\
 & \stackrel{\rm sgn}=3b - 2 + a + 2(b-a)3a\ge 0,
 \end{align*}
where the first inequality follows from $p>3b-2$ and $2(b-a)\le 1$,  
 $A\stackrel{\rm sgn}=B$ represents that $A$ and $B$ have the same sign, and {the $\stackrel{\rm sgn}=$ is due to $1-b< 1/n$ and $n=3$ by the observations at the beginning of (c).}
Hence, we have $\mu(H_1^*)\ge a$ and similarly $\mu(H_2^*)\le b$.
\end{itemize}
In each of the above three cases, we have $G_{0,1}= p H_1^* + (1-p)H_2^*$,  $H_1^*$ has a decreasing density on $[0,na]$ with $\mu(H_1^*) \ge a$, and $H_2^*$ has an increasing density on $[1-n(1-b),1]$ with $\mu(H_2^*) \le b$.

 On the other hand, let $H_1^0= G_{0,p}$ and  $H_2^0 = G_{p,1}$. Then it is obvious that  $G_{0,1}=pH_1^0 + (1-p)H_2^0$,  $\mu(H_1^0) \le a $ and $\mu(H_2^0)\ge b$.
Hence,  we can find some $\alpha \in [0,1]$ such that $ \alpha \mu(H_1^*) + (1-\alpha) \mu(H_1^0) = a$.
Then  the distribution
$H_1:= \alpha H_1^* + (1-\alpha)H_1^0 $ is supported in $[0,na]$ with decreasing density and $\mu(H_1)=a$. By Theorem 3.2 of \cite{WW16}, $G$ is $n$-CM, that is, $\delta _{na} \in \mathcal D_n( G,\dots,G )$.
Similarly, we have $H_2 = \alpha H_2^* + (1-\alpha)H_2^0 $ is supported in $[1-n(1-b),1]$ with an increasing density and $\mu(H_2)=b$, which implies that $H_2$ is also $n$-CM. That is, $ \delta_ {nb} \in \mathcal D_n( H_2,\dots,H_2)$.
By Lemma \ref{lm-180905-1}, we have
 $$ T_{n} (F) =  p \delta _{n a} + (1- p) \delta _{n b }  \in \mathcal D_n( p H_1 + (1-p )H_2,\dots, p H_1 + (1-p )H_2) = \mathcal D_n( \U[0,1] ,\dots, \U[0,1]).$$
Thus, we complete the proof.
\end{proof}

Now we are ready to show our main result in dimension $n\ge 3$.
It turns out that for standard uniform distributions, the two sets
$\mathcal D_n(F_1,\dots, F_n) $ and $ \mathcal C(F_1\oplus\dots\oplus F_n)$ in Lemma \ref{lm-cxset-1} coincide.
The following theorem is, to the best of our knowledge, the first analytical characterization of $\mathcal D_n$ for  continuous marginal distributions.
\begin{theorem}\label{th:main} For $n \ge3$, we have $\mathcal D_n^{\rm U} = \mathcal C^{\rm U}_n$.
\end{theorem}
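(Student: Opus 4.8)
The inclusion $\D_n^{\rm U}\subseteq \C_n^{\rm U}$ is immediate from Lemma \ref{lm-cxset-1}, since the comonotonic sum of $n$ copies of $\U[0,1]$ is $\U[0,1]^{\oplus n}=\U[0,n]$. All the difficulty lies in the reverse inclusion $\C_n^{\rm U}\subseteq \D_n^{\rm U}$: given $G\prec_{\rm cx}\U[0,n]$ I must exhibit $U_1,\dots,U_n\sim\U[0,1]$ whose sum has law $G$. The plan is to reduce to a finitely supported target and then to reduce the whole statement to a single decomposition problem about $\U[0,1]$, generalizing the two-piece splitting used in the proof of Lemma \ref{lm-eqbi}.

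For the first reduction, let $X\sim G$ and quantize by conditional means over a finite partition of $[0,n]$: the coarsening $\hat X=\E[X\mid X\in I_i]$ satisfies $\hat X\prec_{\rm cx}X\prec_{\rm cx}\U[0,n]$ and converges weakly to $X$ as the mesh shrinks. Since $\D_n^{\rm U}$ is closed under weak convergence (Lemma \ref{lm-180905-1}), it suffices to treat finitely supported $\hat G=\sum_{j=1}^m p_j\delta_{s_j}$ with $\hat G\prec_{\rm cx}\U[0,n]$ and $s_j\in[0,n]$. Writing $\mu_j=s_j/n\in[0,1]$ and rescaling by $n$, the constraint becomes $\nu:=\sum_{j=1}^m p_j\delta_{\mu_j}\prec_{\rm cx}\U[0,1]$.

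The crux is then the following decomposition claim: whenever $\nu=\sum_j p_j\delta_{\mu_j}\prec_{\rm cx}\U[0,1]$, one can write
\begin{equation*}
\U[0,1]=\sum_{j=1}^m p_j H_j,
\end{equation*}
where each $H_j$ is supported in $[0,1]$, has mean $\mu_j$, and is $n$-completely mixable. Granting this, the theorem follows exactly as in Lemma \ref{lm-eqbi}: draw a label $L$ with $\p(L=j)=p_j$ and, conditionally on $L=j$, use the $n$-CM coupling $(V_1,\dots,V_n)$ of $H_j$ (so $V_i\sim H_j$ and $V_1+\dots+V_n=n\mu_j=s_j$ a.s.), and set $U_i=V_i$. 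Then each $U_i\sim\sum_j p_j H_j=\U[0,1]$, while $U_1+\dots+U_n=s_L\sim\hat G$, so $\hat G\in\D_n^{\rm U}$; the closedness argument then returns $G\in\D_n^{\rm U}$.

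It remains to construct the decomposition, and this is where I expect the genuine work lies and where $n\ge3$ is indispensable. The available $n$-CM building blocks are uniform pieces (which are $n$-CM for every $n\ge2$) together with monotone-density pieces: by Theorem 3.2 of \cite{WW16}, a decreasing density on an interval of length $\ell$ is $n$-CM as soon as its mean lies at least $\ell/n$ above the left endpoint (symmetrically for increasing densities). For $n=2$ this forces any piece whose support abuts $0$ to be the \emph{symmetric} uniform of mean $\mu_j$, which is far too rigid to tile $\U[0,1]$ — this rigidity is precisely the source of $\D_2^{\rm U}\subsetneq\C_2^{\rm U}$; for $n\ge3$ a decreasing-density piece of mean $\mu_j$ may extend to length $n\mu_j$, and this extra skewness is exactly what makes the tiling possible. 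I would build the $H_j$ by the bracketing device of Lemma \ref{lm-eqbi}, producing monotone-density pieces whose means straddle the targets $\mu_j$ and then convex-combining them to hit each $\mu_j$ while keeping the sum equal to $\U[0,1]$, organized as an induction on the number of atoms $m$ that peels off one extreme piece (say of smallest mean $\mu_1$) at a time. The hard part will be phrasing the induction for a class of residual targets stable under peeling — the residual $(\U[0,1]-p_1H_1)/(1-p_1)$ is no longer uniform — and checking that nonnegativity of the residual density and the inherited convex-order constraint $\sum_{j\ge2}\frac{p_j}{1-p_1}\delta_{\mu_j}\prec_{\rm cx}(\U[0,1]-p_1H_1)/(1-p_1)$ can always be arranged; this is the main obstacle.
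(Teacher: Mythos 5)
Your reductions are sound and coincide with the paper's: the conditional-expectation quantization plus closedness of $\D_n^{\rm U}$ under weak convergence (Lemma \ref{lm-180905-1}) correctly reduces the problem to finitely supported $F\prec_{\rm cx}\U[0,1]$ (after scaling), and your labelling/mixing argument correctly shows that the theorem would follow from your decomposition claim. But the decomposition claim --- that $\U[0,1]=\sum_j p_j H_j$ with each $H_j$ $n$-CM of mean $\mu_j$ whenever $\sum_j p_j\delta_{\mu_j}\prec_{\rm cx}\U[0,1]$ --- is exactly as strong as the theorem itself (given the theorem one recovers it by symmetrizing the coupling over permutations of the coordinates), and you do not prove it: you sketch a ``peel off one atom'' induction and explicitly concede that making the residual step work is ``the main obstacle.'' That obstacle is genuine. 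Once you remove $p_1H_1$ the residual $(\U[0,1]-p_1H_1)/(1-p_1)$ is no longer uniform, so the inductive hypothesis no longer applies, and there is no evident choice of $H_1$ for which nonnegativity of the residual density and the inherited convex-order constraint can both be guaranteed; the two-atom case of Lemma \ref{lm-eqbi} relies on delicate case analysis ($a\ge 1/n$, $1-b\ge 1/n\ge a$, $1-b<1/n$) that does not propagate to a residual class stable under peeling. So as it stands the proposal is a plausible strategy with the central construction missing, not a proof.

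For comparison, the paper avoids proving your tiling claim altogether. Its induction is on the number of atoms $m$ of $F$, with the base case $m=2$ being Lemma \ref{lm-eqbi}, and it splits into two cases according to whether the stop-loss constraint $W_F(t)=\E[(X-t)_+]-(1-t)^2/2\le 0$ is tight somewhere. If $W_F(t)=0$ at some interior $t$, then $t=p_1+\cdots+p_{k-1}$ and $F$ factors as $tF_1+(1-t)F_2$ with $F_1\prec_{\rm cx}\U[0,t]$ and $F_2\prec_{\rm cx}\U[t,1]$; the two subproblems are solved by induction on the subintervals and glued with an independent event of probability $t$, which keeps each coordinate exactly $\U[0,1]$. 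If the constraint is strict everywhere, the paper peels off the extreme bi-atomic distribution $G$ on $\{a_1,a_m\}$ with mean $1/2$, writes $F=\theta G+(1-\theta)H$ with $H$ having at most $m-1$ atoms and $H\prec_{\rm cx}F$, and --- when $G\not\prec_{\rm cx}\U[0,1]$ --- forms $F_0=(G+\alpha_0F)/(1+\alpha_0)$ with $\alpha_0$ chosen so that $W_{F_0}\le 0$ with equality at some $t_1$, so the tight case applies to $F_0$; then $F$ is recovered as a convex combination of $F_0$ and $H$ and convexity of $\D_n^{\rm U}$ finishes. In other words, the paper's key tools are the tightness/splitting dichotomy and convexity of $\D_n^{\rm U}$, rather than a global decomposition of $\U[0,1]$ into $n$-CM pieces; this is precisely the machinery you would need to supply to close the gap in your plan.
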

\begin{proof} As $\mathcal D_n^{\rm U} \subseteq \mathcal C^{\rm U}_n$, it suffices to show $  \mathcal C^{\rm U}_n\subseteq \mathcal D_n^{\rm U},$ that is, for any distribution function $F$,
  $F\lcx \U[0,1]$ implies $\mathcal D_n^{\rm U}(1/n)$. Here and throughout the proof, we use the notation $ \mathcal D_n^{\rm U}(x) =  \mathcal D_n({\rm U}[0,x],\ldots,{\rm U}[0,x])$ for $x\in\R.$  For a distribution $F$, denote
$$ W_F(t) =  \E [(X-t)_+]  -  \frac{(1-t)^2}{2},~~ t \in [0,1] ,$$
where $X$ is a random variable having the distribution function $F$.
 We first consider the special case that  $F$ is a distribution function of a discrete random variable $(a_1,p_1;\ldots;a_m,p_m)$ with $0\le a_1<\cdots<a_m\le 1$ and $G_{x,y}$ is the distribution function of $\U[x,y]$, $x<y$.  By Lemma \ref{lm-eqbi}, we know the result holds for $m=2$. Next, we show it holds for general $m\ge 2$ by induction.

 For general $m\ge 2$, 
%
%
by $F\prec_{\rm cx} G_{0,1}$, we have for 
$W_F(t)\le 0$ for $t\in[0,1]$,
    that is, for $k=2,\ldots,m$, we have
   \begin{equation}\label{eq-cxcon}
       (a_k-t)p_k+\cdots+(a_m-t)p_m\le \frac{(1-t)^2}{2},~~~a_{k-1}\le t\le a_{k}.
    \end{equation}
    Next, we consider two cases.
    \begin{enumerate}[(a)]
\item    If there exists $t\in [a_{k-1}, a_{k})$ such that the equality  of \eqref{eq-cxcon} holds, then $p_1+\cdots+p_{k-1}=t$. {To see this, denote $\ell_1(t):= (a_k-t)p_k+\cdots+(a_m-t)p_m$, and $\ell_2(t)={(1-t)^2}/{2}$, $t\in [a_{k-1},a_k]$.
Note that $\ell_1$ is a linear function which lies below the decreasing quadratic curve$\ell_2$. Thus,   $\ell_1$ and the quadratic curve $\ell_2$ are tangent at the point $t$ and the tangent point $t$ satisfies  the equation $t-1=  -p_k-\cdots-p_m$, that is,  $p_1+\cdots+p_{k-1}=t$.}
Let $X_1$ and $X_2$ be two random variables satisfying $\p(X_1=a_i)=p_i/t$, $i=1,\ldots,k-1$, and $\p(X_2=a_i)=p_i/(1-t)$, $i=k,\ldots,m$. Denote by $F_1$ and $F_2$  the distributions of $X_1$ and $X_2$, respectively. It is easy to verify that $F=tF_1+(1-t)F_2$, $F_1\prec_{\rm cx} G_{0,t}$ and $F_2\prec_{\rm cx} G_{t,1}$.
   Then by induction, we have $F_1\in  \mathcal D_n^{\rm U}(t/n)$ and  $F_2\in   \mathcal D_n({\rm U}[t/n,1/n],\ldots,{\rm U}[t/n,1/n])$. That is, there exist $X_{11},\ldots,X_{1n}\sim {\rm U}[0,t/n]$ and $X_{21},\ldots,X_{2n}\sim {\rm U}[t/n,1/n]$ such that
   $$
       X_{11}+\cdots+X_{1n}\sim F_1~~{\rm and}~~ X_{21}+\cdots+X_{2n}\sim F_2.
   $$
Without loss of generality, assume  that   $X_{11},\ldots,X_{1n}$ are independent of $X_{21},\ldots,X_{2n}$.   Let $A$ be a random event independent of $X_{11},\ldots,X_{1n}$, $X_{21},\ldots,X_{2n}$ such that $\p(A)=p_1+\cdots+p_{k-1}=t$. Define
   $$
      Y_{i} =  X_{1i}1_A + X_{2i} 1_{A^c},~~~~i=1,\ldots,n.
   $$
   It is obvious that  $Y_i\sim {\rm U}[0,1]$, $i=1,\ldots,n$, and $Y_1+\cdots+Y_n\sim F$. This means $F\in  \mathcal D_n^{\rm U}(1/n,\ldots, 1/n)$.

 \item If the  inequality  of \eqref{eq-cxcon} is strict for every $t\in (0,1)$, define two functions $G$ and $H$ as
$$ G =  \lambda \delta_{a_1} +(1-\lambda)\delta_{a_m} ~~\text{ and} ~~H = \frac{F - \theta G }{1-\theta},$$
where $\delta_{a}$ denote the degenerated distribution at point $a$,
$$
\lambda=\frac{a_m - 1/2 } {a_m-a_1} ~~{\rm and}~~\theta = \min \left\{ \frac {p_1} \lambda, \frac{p_m} {1-\lambda}\right\}< 1.
$$
It is easy to verify that $G$ and $H$ are two distribution functions satisfying $ F = \theta G + (1-\theta) H $. We also assert that  $ H \lcx F$.  To see it, let $X\sim F,$ $Y \sim G ,$ and $Z \sim H $. Then for any convex function $\phi$, we have
  $$\phi(X) \le   \frac{a_m - X }{ a_m  - a_1  }  \phi(a_1) +  \frac{ X-a_1 }{ a_m  - a_1  }  \phi(a_m) ,~~{a.s.},$$
It then follows that
$$ \E[\phi(X)] \le  \frac{a_m - \E [X] }{ a_m  - a_1  }  \phi(a_1) + \frac{ \E[X]-a_1 }{ a_m  - a_1  } \phi(a_m) =
\lambda \phi(a_1) + (1-\lambda) \phi(a_m) = \E [\phi(Y)], $$
Also, note that  $F = \theta G + (1-\theta) H$ which implies
$ \E[\phi(X)] = \theta \E[\phi(Y)] + (1- \theta ) \E[\phi(Z)]$. Combined with $ \E[\phi(X)] \le  \E[\phi(Y)]$, we have  $\E[\phi(Z)] \le \E[\phi(X)],$
that is, $ H \lcx F$. This implies $H\lcx \U[0,1]$ and note that the support of distribution  $H$ has  at most $m-1$ points. Using inductive hypothesis, we have
$H \in \mathcal D_n^{\rm U}(1/n)$.


On the other hand, if $ a_m - a_1 \le \frac{1}{2} $, we know $ G \lcx \U[0,1] $,  $G \in \mathcal D_n^{\rm U}(1/n,\ldots, 1/n)$. Then
$F = \theta G + (1-\theta) H \in \mathcal D_n^{\rm U}(1/n)$. Otherwise, if $ a_m - a_1 > \frac{1}{2} $, then $ G \not\lcx \U[0,1]$, that is, $ W_G(t_0) > 0 $ for some $t_0\in(0,1)$.
 Define a function $$\alpha(t) = -\frac{ W_{G} (t) }{  W_{F} (t)} ,~~ t\in (0,1) $$
 which is continuous function satisfying $\alpha (0+) = \alpha (1- ) = - 1 $ as $ W_F(t) = W_G(t) = - {t^2}/{2}$ for $t\in [0,a_1] $  and $W_F(t) = W_G(t) = -{(1-t)^2}/{2}$ for $t\in [a_m,1] $.
  Hence, we have $\alpha(t) $ takes its maximum value  at some $t\in (0,1)$ and the maximum value is positive.  Without loss of generality, assume  $\alpha_0 =  \alpha (t_1) > 0 $ is its maximum value. Then we have
 \begin{equation}\label{eq-180905-1}
 W_G (t) + \alpha_0 W_F(t) \le 0  ~~{\rm for~all}~ t\in [0,1]~~  \text {and} ~~  W_G (t_1) + \alpha_0 W_F(t_1) = 0.
 \end{equation}
Define a distribution $F_0 := (G + \alpha_0 F )/({1+ \alpha_0})$. Then
by \eqref{eq-180905-1}, we have
$F_0 \lcx \U[0,1]$ and $W_{F_0} (t_1 ) = 0 $. By Case (a),  we have $F_0  \in \mathcal  D_n^{\rm U}(1/n)$.

Note that  $ G = (1+\alpha_0)  F_0 - \alpha_0 F $ and $F = \theta G + (1-\theta) H $, which implies
$ F = (1+\alpha_0)  \theta F_0 - \alpha_0 \theta F + (1-\theta) H, $
that is,
$$F= \frac   {  ( \theta+ \alpha_0 \theta )  F_0 + (1-\theta) H   }{  1 +  \alpha_0  \theta  }. $$
Then by Lemma \ref{lm-180905-1}, we have $F  \in \mathcal  D_n^{\rm U}(1/n)$.

%
%
%
%
%
   \end{enumerate}
If $F$ is a general distribution function such that $F\prec_{\rm cx} {\rm U}[0,1]$, then  ${\rm Supp}(F)\subset [0,1]$ and it has no mass on $0$ and $1$.  For any  $n\in\N$, define $F_n$ as the distribution function of $X_n$
 $$
    X_n= \sum_{k=1}^{n} \E \left[X \left| \frac{k-1}{n} \le X < \frac{k}{n} \right.\right] 1_{\{ \frac{k-1}{n} \le X < \frac{k}{n} \}}
 $$
where $X$ is a random variable having distribution function $F$.  Then $F_n$ converges to $F$ in weak convergence as $n\to\infty$, and  $F_n\prec_{\rm cx} {\rm U}[0,1]$.
By the above proof for discrete disitributions with finite support, we have $F_n\in \mathcal  D_n^{\rm U}(1/n)$ for each $n\in\N$. Then by Lemma \ref{lm-180905-1}, we have $F\in \mathcal  D_n^{\rm U}(1/n)$. Thus, we complete the proof.
\end{proof}

For any random variable $X\sim F$ with mean 0, we have $F_a\lcx F$ for any $a\in [0,1]$, where $F_a$ is the distribution of $aX$. Hence, we immediately get the following corollary.
Note that this corollary, although looks simple, does not seem to allow for an elementary proof without using Theorem \ref{th:main}.

\begin{corollary}
For $n \ge 3$, if  $F\in\mathcal D_n (\U[-1,1],\ldots,\U[-1,1])$, then so is $F_a$ for all $a\in [0,1]$.
\end{corollary}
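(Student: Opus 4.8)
The plan is to deduce the corollary from the convex-order characterization in Theorem~\ref{th:main} rather than by manipulating couplings of the summands directly. First I would translate everything into the language of convex order. Since $\U[-1,1]$ has mean $0$, the comonotonic sum of $n$ copies of $\U[-1,1]$ is $\U[-n,n]$, so Lemma~\ref{lm-cxset-1} already gives the inclusion $\mathcal D_n(\U[-1,1],\dots,\U[-1,1])\subset\mathcal C(\U[-n,n])$. The affine map $x\mapsto 2x-1$ sends $\U[0,1]$ to $\U[-1,1]$; applying it to each margin and using the shift and scale invariance of $\mathcal D_n$ recorded in Section~\ref{sec:2}, together with the invariance of convex order under the affine bijection $y\mapsto 2y-n$, I would upgrade Theorem~\ref{th:main} to the identity $\mathcal D_n(\U[-1,1],\dots,\U[-1,1])=\mathcal C(\U[-n,n])$ for $n\ge3$.

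The second ingredient is the elementary fact, already flagged in the sentence preceding the corollary, that contracting a mean-zero distribution toward its mean decreases it in convex order: if $X\sim F$ with $\E[X]=0$ and $a\in[0,1]$, then $F_a\lcx F$, where $F_a$ is the law of $aX$. I would prove this by writing $aX$ as the convex combination $aX+(1-a)\cdot 0$ of $X$ and $\E[X]=0$, then applying convexity followed by Jensen's inequality: for every convex $\phi$,
\[
\E[\phi(aX)]\le a\,\E[\phi(X)]+(1-a)\phi(0)\le a\,\E[\phi(X)]+(1-a)\E[\phi(X)]=\E[\phi(X)].
\]
Here the first inequality is the convexity of $\phi$ applied pointwise, and the second is $\phi(0)=\phi(\E[X])\le\E[\phi(X)]$, again by Jensen. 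Thus $F_a\lcx F$.

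Finally I would assemble the two pieces. Any $F\in\mathcal D_n(\U[-1,1],\dots,\U[-1,1])$ has mean equal to the sum of the marginal means, namely $0$, by the last assertion of Lemma~\ref{lm-cxset-1}; hence the previous step applies and $F_a\lcx F$ for every $a\in[0,1]$. Combining this with $F\lcx\U[-n,n]$ from the characterization and the transitivity of convex order gives $F_a\lcx\U[-n,n]$, that is, $F_a\in\mathcal C(\U[-n,n])=\mathcal D_n(\U[-1,1],\dots,\U[-1,1])$, which is the claim. I do not expect any genuine obstacle once Theorem~\ref{th:main} is in hand: all the difficulty lives in that theorem. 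The point worth emphasizing is exactly the one made in the remark introducing the corollary: without the identity $\mathcal D_n=\mathcal C_n$ it is not at all transparent that the attainable set of sum-distributions is stable under the contraction $X\mapsto aX$, since that operation has no natural description at the level of the admissible couplings of the uniform summands, whereas after the characterization it is immediate from the convex-order closure of $\mathcal C(\U[-n,n])$.
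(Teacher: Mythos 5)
Your proposal is correct and follows essentially the same route as the paper: the paper's (implicit) argument is precisely that $F_a\lcx F$ for mean-zero $F$, combined with Theorem~\ref{th:main} (transported to $\U[-1,1]$ margins by shift/scale invariance) to conclude $F_a\in\mathcal C(\U[-n,n])=\mathcal D_n(\U[-1,1],\ldots,\U[-1,1])$. Your write-up merely fills in the details the paper leaves to the reader, namely the Jensen-type verification of $F_a\lcx F$ and the affine normalization, both of which are done correctly.
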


\section{An application}

In risk management,
often one needs to optimize a  statistical functional, mapping $\mathcal F$ to $\R$ (such as a risk measure), over the set of $\mathcal D_n(F_1,\dots,F_n)$,
and this type of problem is called \emph{risk aggregation with dependence uncertainty} (see e.g.~\cite{EPR13} and \cite{BJW14}).
These problems are typically quite difficult to solve in general, as the set $\mathcal D_n(F_1,\dots,F_n)$ is a complicated object.
For uniform marginal distributions, using results in this paper (in particular, Theorem \ref{th:main}), we are able to translate many optimization problems on $\mathcal D_n^{\mathrm{U}}$
to $\mathcal C^{\rm U}_n$ for $n\ge 3$, which is a convenient object to work with.

We study an application of the problem of minimizing or maximizing for a given interval $A$, the value of $\p(S\in A)$ where $S$ is the sum of $n$ standard uniform random variables.
A special case of the problem concerns bounds on $\p(S\le x)$ for $x\in \R$, i.e., bounds on $F(x)$ for $F\in  \mathcal D^{\mathrm U} _n $,  is studied by \cite{R82}.
Using  Theorem \ref{th:main}, we are able to solve the problem of $\p(S\in A)$ completely.

\begin{proposition}\label{prop:51}
For $n\ge 3$, and $0\le a\le a+b\le n$,
we have
\begin{align}\label{eq:new1}
\min_{F_S\in \mathcal D^{\mathrm U}_n} \p(S\in (a,a+b)) = \left(\frac{2b}n-1\right)_+,
\end{align}
and
 \begin{align}\label{eq:new2}
\max_{F_S\in \mathcal D^{\mathrm U}_n }\p(S\in [a,a+b])  = \min\left\{\frac{2(a+b)}n, \frac{2(n-a)}n,1\right\},
\end{align}
where $F_S$ stands for the cdf of $S$.
\end{proposition}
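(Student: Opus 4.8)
The plan is to invoke Theorem \ref{th:main} at the outset, replacing the intractable set $\mathcal D_n^{\rm U}$ by the convex-order ball $\mathcal C^{\rm U}_n=\mathcal C(\U[0,n])$, which is legitimate precisely because $n\ge 3$. Writing $U\sim\U[0,n]$, the entire proposition will then rest on a single two-sided tail estimate: for every $F_S\in\mathcal C(\U[0,n])$ and every $x\in[0,n]$,
\begin{equation*}
\p(S\le x)\le \frac{2x}{n}\qquad\text{and}\qquad \p(S\ge x)\le \frac{2(n-x)}{n}.
\end{equation*}
To prove the first bound I would test the convex order $F_S\lcx\U[0,n]$ against the convex function $\phi_k(y)=(k-y)_+$, which gives $\E[(k-S)_+]\le\E[(k-U)_+]=k^2/(2n)$ for $k\in[0,n]$; combining this with the elementary pointwise inequality $(k-S)_+\ge (k-x)\one_{\{S\le x\}}$ yields $(k-x)\p(S\le x)\le k^2/(2n)$, and optimizing $\p(S\le x)\le k^2/\big(2n(k-x)\big)$ over $k>x$ gives the minimizer $k=2x$ and the claimed $2x/n$ (vacuous when $x>n/2$). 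The second bound follows by applying the first to $n-S$, since $\mathcal C(\U[0,n])$ is invariant under the reflection $y\mapsto n-y$ (as $\U[0,n]$ is symmetric about $n/2$).

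Granting this estimate, one direction of each identity is immediate. For \eqref{eq:new2}, since $\{S\in[a,a+b]\}\subseteq\{S\le a+b\}\cap\{S\ge a\}$, the two tail bounds give $\p(S\in[a,a+b])\le\min\{2(a+b)/n,\,2(n-a)/n\}$, and together with the trivial $\le 1$ this is the upper bound. For \eqref{eq:new1}, I would write $\p(S\in(a,a+b))=1-\p(S\le a)-\p(S\ge a+b)$ and bound the two subtracted terms by $2a/n$ and $2(n-a-b)/n$, whose sum is $2(n-b)/n$; hence $\p(S\in(a,a+b))\ge 1-2(n-b)/n=2b/n-1$, and with the trivial $\ge 0$ this is the lower bound.

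It then remains to exhibit, in each regime, a law in $\mathcal C(\U[0,n])$ attaining the bound. For \eqref{eq:new1} with $b\le n/2$ the interval can be avoided entirely: take $\delta_{n/2}$ if $n/2\notin(a,a+b)$, and otherwise the two-atom law on $\{a,a+b\}$ with mean $n/2$, which a direct stop-loss computation shows satisfies $\lcx\U[0,n]$ exactly when $b\le n/2$. When $b>n/2$ (which forces $(a,a+b)\ni n/2$ and $(a,a+b)\subseteq[0,n]$) I would use the three-atom law with mass $2a/n$ at $a$, mass $2(n-a-b)/n$ at $a+b$, and the residual mass $2b/n-1$ at a single interior point $c$ fixed by the mean constraint $\mu=n/2$; one checks $c\in[2a,\,2(a+b)-n]\subseteq(a,a+b)$ and that both tail estimates hold with equality, so the interior probability is exactly $2b/n-1$. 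The three regimes of \eqref{eq:new2} are attained, respectively, by $\delta_{n/2}$ (value $1$, active when $n/2\in[a,a+b]$), by an atom of mass $2(a+b)/n$ at $a+b$ with the remaining mass placed on $[2(a+b),n]$ so that $\mu=n/2$ (value $2(a+b)/n$), and by the reflected construction (value $2(n-a)/n$).

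The main obstacle is confirming that these extremal laws genuinely lie in $\mathcal C(\U[0,n])$: pinning the mean at $n/2$ is routine, but I must verify the full stop-loss inequality \eqref{eq:eqcx} against $\U[0,n]$ at \emph{all} thresholds $k$, not only at the two where equality is forced. Once the atom locations and weights are fixed this reduces to checking that a convex-minus-quadratic function stays nonnegative on $[0,n]$, which is elementary but is where the geometry of $\U[0,n]$ — in particular the threshold $b=n/2$ separating the two regimes of \eqref{eq:new1} — actually enters. A secondary bookkeeping point is the open-versus-closed distinction between the two intervals, which makes the minimum a strict-interior probability and the maximum a closed-interval probability, and is responsible for the $(\cdot)_+$ in \eqref{eq:new1} as against the $\min\{\cdot,1\}$ in \eqref{eq:new2}; the explicit attaining laws also make attainment of the extrema (hence the use of $\min$ and $\max$ rather than $\inf$ and $\sup$) automatic, so no separate semicontinuity argument via Lemma \ref{lm-180905-1} is needed.
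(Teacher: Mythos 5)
Your proposal is correct in substance and arrives at the same bounds and essentially the same extremal laws as the paper, but the two arguments differ in how membership in $\mathcal C^{\rm U}_n$ is certified. You derive the key tail estimate $\p(S\le x)\le 2x/n$ by testing $F_S\lcx \U[0,n]$ against $(k-y)_+$ and optimizing over $k$ (giving $k=2x$); the paper obtains the same inequality from the quantile form of convex order, $F_S^{-1}(\alpha)\alpha\ge\int_0^\alpha F_U^{-1}(t)\d t=n\alpha^2/2$, hence $F_S^{-1}(\alpha)\ge n\alpha/2$ --- both routes are fine. The more significant difference is attainment: the paper builds every extremizer as a conditional expectation $S=\E[U\mid\mathcal A_{u,v}]$ of $U\sim\U[0,n]$, so $F_S\lcx\U[0,n]$ is automatic by Jensen's inequality and no stop-loss verification is ever needed, whereas your plan fixes atom locations and weights and defers a hand check of the stop-loss inequality at all thresholds, which you rightly flag as the main remaining work. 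That check does go through for your minimum-case laws: your three-atom law for $b>n/2$ is exactly the paper's $F_{u,v}$ with $u=2a$, $v=2(a+b)-n$ and middle atom at $(u+v)/2$, and your two-atom law on $\{a,a+b\}$ with mean $n/2$ lies in $\mathcal C^{\rm U}_n$ precisely when $b\le n/2$, which is the scaled content of Lemma \ref{lm-eqbi}. One caution on the maximum in the case $a+b\le n/2$: ``mass $2(a+b)/n$ at $a+b$ with the remaining mass placed on $[2(a+b),n]$ so that $\mu=n/2$'' is not by itself sufficient --- for instance, splitting the remaining mass between atoms at $2(a+b)$ and at $n$ has the correct mean but violates $\E[(S-k)_+]\le(n-k)^2/(2n)$ for $k$ near $n$, so it is not in $\mathcal C^{\rm U}_n$. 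You must commit to a specific placement, e.g.\ a single atom at $(n+2(a+b))/2$ (the paper's choice, again a conditional expectation of $U$) or the uniform distribution on $[2(a+b),n]$; with that fixed, the remaining convex-minus-quadratic verification is elementary and your proof closes, mirroring Theorem \ref{th:main} plus the paper's argument with slightly more computation.
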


\begin{proof}
As Theorem \ref{th:main} gives $ \mathcal D^{\mathrm U} _n=\mathcal C^{\rm U}_n$, it suffices to look at the optimization problems for $ \mathcal C^{\rm U}_n$.
For $0\le u\le v\le n$, let $\mathcal A_{u,v}$ be the sigma field generated by $\{U\le u\}$ and $\{U\le v\}$.
 $S=\E[U|\mathcal A_{u,v}]$ is  tri-atomically distributed
with distribution measure \begin{align}\label{eq:new3} \frac un \delta_{u/2}  + \frac{v-u}{n}\delta_{(u+v)/2}+ \frac{n-v}{n}\delta_{(n+v)/2}.\end{align}
Note that $F_S\prec_{\rm cx} F_U$  because  $S$ is a conditional expectation of $U$, and thus $F_S \in \mathcal C^{\rm U}_n= \mathcal D^{\mathrm U} _n$.

We first verify the following inequality (indeed, it is Theorem 1 of \cite{R82}). For any $S$ which is the sum of $n$ standard uniform random variables and $x\in \R$, we have
\begin{equation}\label{eq:new4} \p(S\le x)\le 2x/n ~~~\mbox{and }~~~\p(S\ge x)\le 2(n-x)/n.\end{equation}
Equation \eqref{eq:new4} can be shown using the equivalent condition of convex order (see Theorem 3.A.5 of \cite{SS07}). Using $F_S\prec_{\rm cx} F_U$,
\begin{align*}
F^{-1}_S(\alpha)\alpha \ge \int_0^{\alpha} F^{-1}_S(t) \d t \ge \int_0^{\alpha } F^{-1}_U(t)\d t = \frac{n\alpha^2}{2}, ~~ \alpha \in (0,1) .
\end{align*}
Therefore, $F^{-1}_S(\alpha)\ge \frac{n\alpha}{2}$, $\alpha \in (0,1)$, and equivalently $F_S(x)\le 2x/n$, $x\in \R$. The other inequality in \eqref{eq:new4} is symmetric {by noting that $n-S$ is still the sum of $n$ standard uniform random variables and $\p(S\ge x)=\p(n-S\le n-x)$.}

We now analyze the problem of the minimum in \eqref{eq:new1}.
\begin{enumerate}[(i)]
\item Suppose $b\le n/2$. Since $(n+u)/2-u/2 =n/2>b$,
we can find $u\in [0,n]$ such that $u/2\le a$ and $(n+u)/2\ge b+a$.
By letting $S=\E[U|\mathcal A_{u,u}]$ and using \eqref{eq:new3}, we have
 $\p(S=u/2) = u/n$ and  $\p(S=(n+u)/2)=(n-u)/n$.
In this case, $\p(S\in (a,a+b))=0$; thus \eqref{eq:new1} holds.
\item Suppose $b>n/2$, which implies $a<n/2$ and $a+b>n/2$.
Let $S$ be given by $\E[U|\mathcal A_{u,v}]$
where $u=2a$ and $v=2(a+b)-n$.
Note that $\p(S\in (a,a+b))= \frac{v-u}{n} = \frac{2b}n-1.$ This shows the ``$\le $" direction of \eqref{eq:new1}.
On the other hand, by \eqref{eq:new4}, for any $S$ which is the sum of $n$ standard uniform random variables, $\p(S\le a)\le 2a/n$ and  $\p(S\ge a+b)\le 2(n-a-b)/n.$
Thus,
$$\p(S\in (a,a+b))\ge 1-\frac{2a}n - \frac{2(n-a-b)}{n} =  \frac{2b}n-1. $$
This shows the ``$\ge$" direction of \eqref{eq:new1}.
\end{enumerate}
Next, we  analyze the problem of the maximum in \eqref{eq:new2}.
\begin{enumerate}[(i)]

\item If $a+b> n/2$ and $a< n/2$, then $n/2\in [a,a+b]$. Taking $S=\E[U]=n/2$ gives $\p(S\in [a,a+b])=1$; thus \eqref{eq:new2} holds.
\item Suppose $a+b\le n/2$. By \eqref{eq:new4}, for any $S$ which is the sum of $n$ standard uniform random variables,
$\p(S\in [a,a+b])\le \p(S\le a+b) \le 2(a+b)/n$.
To see that such a bound is attainable, take
$S=\E[U|\mathcal A_{u,u}]$ where $u=2(a+b)$. Then, by \eqref{eq:new3}, we have
$$\p(S\in [a,a+b])\ge \p(S=a+b) = \p\left(S=\frac u2\right) =     \frac un = \frac{2(a+b)}n.$$
Therefore, \eqref{eq:new2} holds.
\item Suppose $a\ge n/2$.  Similar to the above case, by \eqref{eq:new4}, for any $S$ which is the sum of $n$ standard uniform random variables,
$\p(S\in [a,a+b])\le \p(S\ge a) \le 2(n-a)/n$.
To see that such a bound is attainable, take
$S=\E[U|\mathcal A_{u,u}]$ where $u=2a-n$. Then, by \eqref{eq:new3}, we have
$$\pi(S\in [a,a+b])\ge \p(S=a) = \p\left(S=\frac {n+u}2\right) =     \frac {n-u}n = \frac{2(n-a)}n.$$
Therefore, \eqref{eq:new2} holds.  \qedhere
\end{enumerate}
\end{proof}

\begin{remark}
Based on the proof of Proposition \ref{prop:51},
we can identify some minimizing distributions for \eqref{eq:new1} and some maximizing distributions for  \eqref{eq:new2}.
For $0\le u\le v\le n$,  write the  distribution 
 \begin{align}\label{eq:new-r1} F_{u,v}= \frac un \delta_{u/2}  + \frac{v-u}{n}\delta_{(u+v)/2}+ \frac{n-v}{n}\delta_{(n+v)/2}. \end{align}
There are a few cases. For the minimum in \eqref{eq:new1}: 
\begin{enumerate}
\item If $b\le n/2$, then  $F_{u,u}$  attains \eqref{eq:new1} for $u \in [2a+2b-2n, 2a]$.
\item  If $b> n/2$, then  $F_{u,v} $ attains \eqref{eq:new1} where  $u=2a$ and $v=2(a+b)-n$.
\end{enumerate}
 For the maximum in \eqref{eq:new2}: 
\begin{enumerate} 
\item If $a+b> n/2$ and $a< n/2$, then $\delta_{n/2}$ attains \eqref{eq:new2}. 
\item If  $a+b\le n/2$,  then  $F_{u,u}$  attains  \eqref{eq:new2} where $u=2(a+b)$.
\item If  $a\ge n/2$, then  $F_{u,u}$  attains  \eqref{eq:new2}   where $u=2a-n$.
\end{enumerate}
\end{remark}

\subsection*{Acknowledgements}
The authors thank the Editor, an Associate Editor, and an anonymous referee for careful reading of our paper and very helpful comments which improve the quality of the paper.
T.~Mao was supported by the NNSF of China (grant numbers:~71671176, 71871208, 11501575). B.~Wang acknowledges financial support from the NNSF of China (grant number:~11501017).
 R.~Wang   acknowledges financial support from the Natural Sciences and Engineering Research Council of Canada (RGPIN-2018-03823, RGPAS-2018-522590), and the Center of Actuarial Excellence Research Grant from the Society of Actuaries.


\begin{thebibliography}{10}
\small



\bibitem[\protect\citeauthoryear{Bernard et al.}{2014}]{BJW14}
{Bernard, C., Jiang, X. and Wang, R.} (2014).  Risk aggregation with dependence uncertainty. \emph{Insurance: Mathematics and Economics}, \textbf{54}, 93--108.




\bibitem[\protect\citeauthoryear{Bernard et al.}{2018}]{BBV18}
Bernard, C., Bondarenko, O., Vanduffel, S.  (2018). Rearrangement algorithm and maximum entropy. \emph{Annals of Operations Research.}  \textbf{261}, 107--134.


\bibitem[\protect\citeauthoryear{Embrechts et al.}{Embrechts et al.}{2013}]{EPR13}  Embrechts, P.,  Puccetti, G. and R\"{u}schendorf, L.  (2013). Model uncertainty and VaR aggregation. {\em Journal of Banking and Finance}, \textbf{37}(8), 2750--2764.



\bibitem[\protect\citeauthoryear{Mao  and Wang}{2015}]{MW15}
Mao, T. and Wang, R. (2015). On aggregation sets and lower-convex sets. \emph{Journal of Multivariate
Analysis}, \textbf{136}, 12--25.



\bibitem[\protect\citeauthoryear{McNeil et al.}{McNeil et al.}{2015}]{MFE15}
{McNeil, A. J., Frey, R. and Embrechts, P.} (2015). \emph{Quantitative
Risk Management: Concepts, Techniques and Tools}. Revised Edition.  Princeton, NJ:
Princeton University Press.


\bibitem[\protect\citeauthoryear{M\"uller  and Stoyan}{2002}]{MS02}
M\"uller, A. and Stoyan, D. (2002). \emph{Comparison Methods for Statistical Models and Risks}. Wiley,
England.


\bibitem[\protect\citeauthoryear{Puccetti and Wang}{2015}]{PW15}
Puccetti, G.  and Wang, R. (2015). Extremal dependence concepts. \emph{Statistical Science}, \textbf{30}(4), 485--517.


\bibitem[\protect\citeauthoryear{R\"uschendorf}{1982}]{R82}
R\"uschendorf, L. (1982). Random variables with maximum sums. \emph{Advances in Applied Probability},
\textbf{14}, 623--632.




\bibitem[\protect\citeauthoryear{R{\"u}schendorf}{R{\"u}schendorf}{2013}]{R13}
R{\"u}schendorf, L. (2013).
  {\em Mathematical Risk Analysis. Dependence, Risk Bounds, Optimal
  Allocations and Portfolios}.
  Springer, Heidelberg.

\bibitem[\protect\citeauthoryear{Shaked  and Shanthikumar}{2007}]{SS07}
Shaked, M. and Shanthikumar, J. G. (2007). \emph{Stochastic Orders.} Springer Series in Statistics.


\bibitem[\protect\citeauthoryear{Strassen}{1965}]{S65}
Strassen, V. (1965). The existence of probability measures with given marginals. \emph{Annals of Mathematical Statistics}, \textbf{36}(2), 423--439.


\bibitem[\protect\citeauthoryear{Vovk and Wang}{2018}]{VW18}
Vovk, V. and Wang, R. (2018). Combining p-values via averaging. \emph{arXiv preprint}, arXiv:1212.4966v4.


\bibitem[\protect\citeauthoryear{Wang and Wang}{2016}]{WW16}
Wang, B. and Wang, R. (2016). Joint mixability. \emph{Mathematics of Operations Research}, \textbf{41}(3),
808--826.

\bibitem[\protect\citeauthoryear{Wang, Peng and Yang}{Wang
  et~al.}{2013}]{WPY13}
Wang, R., Peng, L. and Yang, J. (2013).
 Bounds for the sum of dependent risks and worst Value-at-Risk with
  monotone marginal densities.
 {\em Finance and Stochastics}, \textbf{17}(2), 395--417.
\end{thebibliography}
\end{document}